\DeclarePairedDelimiter\floor{\lfloor}{\rfloor}
\def\R{\mathbb{R}}
\def\N{\mathbb{N}}
\newcommand\norm[1]{\left\lVert#1\right\rVert}
\renewcommand{\geq}{\geqslant}
\renewcommand{\leq}{\leqslant}
\renewcommand{\geq}{\geqslant}
\renewcommand{\leq}{\leqslant}
\newcommand {\e}  {\varepsilon}
\newcommand {\Chi} {{\bf \raise 2pt \hbox{$\chi$}} }
\newcommand{\E}[1]{\mathbb{E}\left[{#1}\right]}
\newcommand{\supp}[1]{\operatorname{supp}({#1})}
\newcommand{\beq}{\begin{equation}}
\newcommand{\eeq}{\end{equation}}
\newcommand{\bea} {\begin{array}{rl}}
\newcommand{\eea} {\end{array}}
\newcommand{\bepa}{\left\{ \begin{array}{l}}
\newcommand{\eepa} {\end{array}\right.}
\newcommand{\bmu}{\begin{multline}}
\newcommand{\emu}{\end{multline}}
\newtheorem{theorem}{Theorem}  
\newtheorem{proposition}{Proposition}
\newtheorem{definition}{Definition}
\newtheorem{lemma}{Lemma}
\theoremstyle{definition}\newtheorem{remark}{Remark}
\title{Regularisation for the approximation of functions by mollified discretisation methods}
\date{}
\author{Marc Hoffmann\footnote{Universit\'e Paris Dauphine-PSL and Institut Universitaire de France, hoffmann@ceremade.dauphine.fr} \and Camille Pouchol\footnote{MAP5, UMR UMR 8145, Universit\'e Paris Cit\'e, camille.pouchol@u-paris.fr}}
\begin{document}

\newcounter{assum}

\maketitle

\begin{abstract}

Some prominent discretisation methods such as finite elements provide a way to approximate a function of $d$ variables from $n$ values it takes on the nodes~$x_i$ of the corresponding mesh.  The accuracy is $n^{-s_a/d}$ in $L^2$-norm, where $s_a$ is the order of the underlying method. When the data are measured or computed with systematical experimental noise, some statistical regularisation might be desirable, with a smoothing method of order $s_r$ (like the number of vanishing moments of a kernel). This idea is behind the use of some regularised discretisation methods, whose approximation properties are the subject of this paper. We decipher the interplay of $s_a$ and $s_r$ for reconstructing a smooth function on regular bounded domains from $n$ measurements with noise of order $\sigma$. We establish that for certain regimes with small noise $\sigma$ depending on $n$, when $s_a > s_r$, statistical smoothing is not necessarily the best option and {\it not regularising} is more beneficial than {\it statistical regularising}. We precisely quantify this phenomenon and show that the gain can achieve a multiplicative order $n^{(s_a-s_r)/(2s_r+d)}$. We illustrate our estimates by numerical experiments conducted in dimension $d=1$ with $\mathbb P_1$ and $\mathbb P_2$ finite elements. 
\end{abstract}

\vspace{5mm}

\noindent {\bf Mathematical Subject Classification (2020)}: 62-08; 62C99; 62G05.\\
\noindent {\bf Keywords}: mollified basis, discretisation, nonparametric smoothing, finite elements.

\section{Introduction}

\subsection{Motivation}
Let $\Omega$ be a smooth bounded connected open subset of $\R^d$ for some $d \geq 1$. We are interested in reconstructing a smooth function 
$$f : \overline{\Omega} \to \R$$ 
from its values on a fixed design given by $n$ points $x_i \subset \overline{\Omega}$. These values are moreover corrupted by noise. The points $x_i$ should be thought of as forming a mesh of the set~$\overline{\Omega}$. \\%

We focus on reconstruction methods that rely on \textit{regularised basis functions} or \textit{mollified basis functions}, as introduced in~\cite{Bramble1977, Thomee1977}.
Specifically, we are concerned with the case where functions are naturally (according to some given discretisation procedure) represented as linear combination of basis functions $\phi_i$: in other words, the function $f$ is approximated by 
\begin{equation}
\label{internal}
\sum_{i=1}^n f(x_i)\, \phi_i.
\end{equation}
Typical examples include discretisation of PDEs, where the $\phi_i$ are {\it e.g.} basis functions associated to $\mathbb{P}_k$ finite elements~\cite{ThomeeBookFEM2007, QuarteroniBook2009}. Informally, given a partition of $\overline{\Omega}$, the $\phi_i$ form a basis of the space of continuous functions on $\overline{\Omega}$ whose restriction to each piece of the partition is a polynomial of degree $k$. 

We will use the shorthand notation $u \lesssim v$ (or $v \gtrsim u$) whenever there exists a constant $C>0$ independent of $n$, $\sigma$ and $\beta$ (see below for a precise definition of the bandwidth parameter $\beta$ and the noise level $\sigma$) such that $u \leq C v$ for all $n>0$, $\sigma>0$ and $\beta>0$. We will write $A \sim B$ whenever $A \lesssim B$ and $B \lesssim A$ hold simultaneously. We also find it convenient to introduce a  \textit{discretisation parameter}  $h>0$ satisfying 
\[h \sim n^{-1/d}.\]
Of course, one could simply set $h = n^{-1/d}$, but in applications such as finite elements, there is a natural parameter $h$ which matches $n^{-1/d}$ up to multiplicative constants only.
\\ 

In the setting of~\eqref{internal}, one typically has an estimate of the form 
\[\sup_{f \in \mathcal F}\Big\| f- \sum_{i=1}^n f(x_i)\, \phi_i\Big\| \sim h^{s_a} \sim n^{-s_a/d}, \]
where $\|\cdot\|$ stands for the $L^2(\Omega)$-norm, $s_a>0$ for the order of the approximation method, and $\mathcal F$ a class of sufficiently smooth functions. 
In practice, because of measurement, numerical or roundoff errors, the sum $\sum_{i=1}^n f(x_i)\, \phi_i$ is rather given by 
$$\sum_{i=1}^n (y_\sigma)_i \, \phi_i,$$
with
\[(y_\sigma)_i = f(x_i)+ \sigma \xi_i,\;\;i = 1,\ldots, n,\] 
where our noise model is given by the $\xi_i$, assumed to be independent random variables, centred with unit variance, so that the parameter~$\sigma \geq 0$ quantifies the noise level as the common standard deviation to each measurement error.\\ 

A common and standard approach in alleviating the corresponding error is to operate some linear regularisation on the data given in the form $\sum_{i=1}^n (y_\sigma)_i \, \phi_i$, like {\it e.g.} convolution or projection onto low dimensional vector spaces. By {\it regularisation}, we mean that we are given a family of linear operators  $(R_\beta)_{\beta \geq 0}$ indexed by a smoothing parameter $\beta \geq 0$ such that $R_0 = \mathrm{Id}$ and {\it regularisation order} $s_r$. These typically satisfy estimates of the form
\[\sup_{f \in \mathcal F}\left\| R_\beta f- f\right\| \sim \beta^{s_r},\]
where, as before, $\mathcal F$ is a class of sufficiently smooth functions. 
This leads to estimators of the form
\[R_\beta \Big(\sum_{i=1}^n (y_\sigma)_i \, \phi_i\Big) =  \sum_{i=1}^n (y_\sigma)_i \,  R_\beta \phi_i,\]
and these natural candidate estimators for approximating $f$ are therefore based on the finite-dimensional subspace generated by the $n$ mollified basis functions $R_\beta \phi_i$.\\

 Of course, there are many other, potentially better, estimators at reconstructing $f$ from the data $(y_\sigma)_i$ without necessarily relying on regularised basis functions. There is immense literature on the subject in the field of nonparametric statistics; see {\it e.g.} the textbooks~\cite{GyorfiBook2002, TsybakovBook2008}. That our estimators are linear in particular means that one cannot hope for better approximation properties than those imposed by the Kolmogorov-$n$-width of the class $\mathcal{F}$~\cite{DevoreBook1993, LorentzBook1996}. \\
 
 Our main reason for sticking to this rigid reconstruction framework is that mollifying basis functions is actually quite common practice: such an approach dates back to the works~\cite{Bramble1977, Thomee1977} for parabolic equations. Indeed, these can lead to improved convergence estimates, and more pragmatically, they tend to stabilise the output. Extensions of this framework to hyperbolic equations also exist~\cite{Mock1978, Cockburn2003}, and these methods are still of current interest for applications~\cite{Febrianto2021}. The so-called Reproducing Kernel Element Method introduced in the series of papers~\cite{RKEM1, RKEM2, RKEM3, RKEM4} also relies on similar ideas, see Chapter 6 of~\cite{MeshfreeBook2007}. 
 \\ 
 
However, up to the best of our knowledge, the analysis of such methods does not include statistical errors such as the $\sigma \xi_i$ that are quantified in order by the standard deviation parameter $\sigma$, to be compared with $n$ or $h$. A natural question is therefore to understand how the presence of noise ({\it i.e.} $\sigma >0$ in our model) impacts the previous analysis. In particular, can we optimally quantify the interplay between $\sigma$ and $n$ (or equivalently between $\sigma$ and the mesh size $h$)? In other words, how best to mollify basis functions in the presence of noise, if mollifying is needed at all? This is the topic of the paper.

\subsection{Main results}

Given the setting and methodology described above, our overarching  goal is to choose a regularisation parameter $\beta$ appropriately as a function of the other parameters ({\it i.e.} $s_a, s_r, \sigma, n, d$), so that the reconstruction error when regularising at the order $\beta$ converges to $0$ as fast as possible as the number of observed data $n$ grows to infinity. Here the reconstruction error is defined by
\begin{equation}\label{eq: error def}
e(\beta, \sigma, n):=  \sup_{f \in \mathcal{F}(s,R)} \mathbb{E}\bigg[\Big\| f-R_\beta \Big(\sum_{i=1}^n  (y_\sigma)_i \, \phi_i\Big)\Big\|^2\bigg]^{1/2} ,
\end{equation}
where $\mathbb E[\cdot]$ denotes mathematical expectation w.r.t. the error distributions $(\xi_i)_{1 \leq i \leq n}$ and $\mathcal{F}(s,R)$ is a smoothness class of order $s>0$ in $L^2$, with radius $R>0$ (a Sobolev ball, see the precise definition~\eqref{sobolev_balls}). It is common statistical knowledge, see {\it e.g.} \cite{TsybakovBook2008}, \cite{MR1056335}, that a good choice for $\beta$ as a function of other parameters is given by
\begin{equation}
\label{common_reg}
\beta^\star(\sigma,n) \sim \sigma^{2/(2s_r+d)} n^{-1/(2s_r+d)}.
\end{equation}
as soon as $s \geq s_r$. The purpose of this work is to discuss regularisation strategies as functions of all involved parameters and to compare them to the common one given by~\eqref{common_reg}, or even to the possible strategy of possibly not regularising at all ({\it i.e.} when $\beta = 0$ and then $R_0 = \mathrm{Id}$).
 
We focus on a sufficiently simple and tractable setting as follows:
\begin{itemize}
\item We consider functions with sufficiently many derivatives vanishing on the boundary of $\Omega$ thus avoiding inessential boundary issues,
\item We quantify smoothness with a number of derivatives in $L^2$, hence considering Sobolev balls in~$H_0^s(\Omega)$,
\item We quantify estimation and reconstruction in integrated $L^2$-error loss, 
\item We restrict regularisation to the case of convolution with a kernel possessing vanishing moment properties.
\end{itemize}

The modelling framework developed in the present work could also serve as a stepping stone to analyse similar issues in the context of ill-posed inverse problems, \textit{i.e.} when one has access to (noisy approximations of) $A f(x_i)$ with $A$ a given compact operator from some Hilbert space to $L^2(\Omega)$. When $A$ is associated to an underlying partial differential equation, discretisation is naturally involved, while regularisation becomes necessary not only to cope with measurement errors, but also with the ill-posed nature of the problem~\cite{EnglBook1996, KirschBook2011}.

\subsubsection*{A general estimate}
We gather our two main results by means of informal statements; the precise hypotheses are to be found in Section~\ref{framework}. Our first result gives precise estimate of the error as a function of all the parameters.
\begin{theorem}
\label{general_error}
The error defined in \eqref{eq: error def} satisfies
\[e(\beta, \sigma, n)  \lesssim \sigma \min(\beta^{-1} n^{-1/d},1)^{d/2} + n^{-s_a/d}  + \beta^{s_r}.\]
In particular
 \begin{align*} \inf_{\beta>0} e(\beta, \sigma, n)  & \lesssim
\begin{cases} \sigma +  n^{-s_a/d} & \text{ if } \sigma \lesssim n^{-s_r/d}   \\
\sigma^{2 s_r/(2 s_r +d)}n^{-s_r/(2 s_r +d)} + n^{-s_a/d}  & \text{otherwise}
\end{cases}.
\end{align*}
\end{theorem}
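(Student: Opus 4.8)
The plan is to isolate the random part of the error through a bias--variance decomposition and then to bound the deterministic and stochastic pieces separately. Writing $(y_\sigma)_i = f(x_i)+\sigma\xi_i$ and using linearity of $R_\beta$, the reconstruction splits as
\[
R_\beta\Big(\sum_{i=1}^n (y_\sigma)_i\, \phi_i\Big) = R_\beta\Big(\sum_{i=1}^n f(x_i)\, \phi_i\Big) + \sigma \sum_{i=1}^n \xi_i\, R_\beta \phi_i .
\]
Since the $\xi_i$ are centred and independent of the deterministic part, the cross term vanishes in expectation and
\[
\mathbb{E}\,\Big\| f - R_\beta\Big(\sum_{i=1}^n (y_\sigma)_i \phi_i\Big)\Big\|^2 = \Big\| f - R_\beta\Big(\sum_{i=1}^n f(x_i) \phi_i\Big)\Big\|^2 + \sigma^2\, \mathbb{E}\,\Big\| \sum_{i=1}^n \xi_i\, R_\beta \phi_i \Big\|^2 .
\]
Because $(a^2+b^2)^{1/2}\le a+b$, it suffices to bound the bias (first term on the right) and the variance (second term) uniformly over $\mathcal F(s,R)$.

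For the bias I would split further,
\[
f - R_\beta\Big(\sum_{i=1}^n f(x_i)\phi_i\Big) = (f - R_\beta f) + R_\beta\Big(f - \sum_{i=1}^n f(x_i)\phi_i\Big) ,
\]
and apply the triangle inequality. The first summand is the pure regularisation error, controlled by $\|f-R_\beta f\|\lesssim\beta^{s_r}$ uniformly over $\mathcal F(s,R)$ (using $s\ge s_r$). For the second, I would use that convolution with the kernel is uniformly bounded on $L^2$, with operator norm at most $\|K\|_{L^1}$ independent of $\beta$, together with the approximation estimate $\|f-\sum_{i=1}^n f(x_i)\phi_i\|\lesssim n^{-s_a/d}$. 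This yields a bias bound $\lesssim n^{-s_a/d}+\beta^{s_r}$.

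For the variance, independence, centring and unit variance make the cross terms cancel, so
\[
\mathbb{E}\,\Big\| \sum_{i=1}^n \xi_i\, R_\beta \phi_i \Big\|^2 = \sum_{i=1}^n \|R_\beta \phi_i\|^2 .
\]
The crux is to estimate $\sum_{i=1}^n\|R_\beta\phi_i\|^2$ in terms of $\beta$ and $h\sim n^{-1/d}$, which I would do by exploiting the localisation of the basis functions: each $\phi_i$ has support of measure $\sim h^d$ and is of order one, so $\|\phi_i\|\sim h^{d/2}$ and $\|\phi_i\|_{L^1}\sim h^d$. Two regimes appear. When $\beta\lesssim h$, convolution hardly spreads the mass, $\|R_\beta\phi_i\|\sim\|\phi_i\|$, and $\sum_{i=1}^n\|R_\beta\phi_i\|^2\sim nh^d\sim1$. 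When $\beta\gtrsim h$, Young's inequality gives $\|R_\beta\phi_i\|=\|K_\beta*\phi_i\|\le\|K_\beta\|\,\|\phi_i\|_{L^1}\lesssim\beta^{-d/2}h^d$, hence $\sum_{i=1}^n\|R_\beta\phi_i\|^2\lesssim n\beta^{-d}h^{2d}\sim(\beta^{-1}n^{-1/d})^d$. The two regimes combine into $\sum_{i=1}^n\|R_\beta\phi_i\|^2\lesssim\min(\beta^{-1}n^{-1/d},1)^d$; multiplying by $\sigma^2$ and taking square roots produces the variance contribution $\sigma\min(\beta^{-1}n^{-1/d},1)^{d/2}$, and adding the bias bound gives the general estimate.

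To optimise over $\beta$, note that $n^{-s_a/d}$ is independent of $\beta$, so it remains to minimise $\beta^{s_r}+\sigma\min(\beta^{-1}n^{-1/d},1)^{d/2}$. For $\beta\ge n^{-1/d}$ the minimum equals $\beta^{-1}n^{-1/d}$, and balancing $\beta^{s_r}\sim\sigma\beta^{-d/2}n^{-1/2}$ yields $\beta^\star\sim\sigma^{2/(2s_r+d)}n^{-1/(2s_r+d)}$, recovering \eqref{common_reg}, with common value $\sim\sigma^{2s_r/(2s_r+d)}n^{-s_r/(2s_r+d)}$. A direct computation shows $\beta^\star\ge n^{-1/d}$ exactly when $\sigma\gtrsim n^{-s_r/d}$, which is the second case. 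In the complementary regime $\sigma\lesssim n^{-s_r/d}$ the unconstrained optimum lies below $n^{-1/d}$; there the minimum equals $1$ and the objective reduces to $\beta^{s_r}+\sigma$, minimised by letting $\beta\to0$ (not regularising at all), with infimum $\sigma$, giving the first case. The delicate step is the variance estimate, namely obtaining the sharp $\beta^{-d/2}$ decay of $\|R_\beta\phi_i\|$ for $\beta\gtrsim h$ and checking that summing over the $n$ overlapping supports preserves the exponent; this is precisely where the localisation and scaling hypotheses on the $\phi_i$ and on the kernel from Section~\ref{framework} must be invoked. Once the uniform $L^2$-boundedness of $R_\beta$ is in hand, the remaining steps are routine.
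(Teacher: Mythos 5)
Your proposal is correct and follows essentially the same route as the paper: the same bias--variance decomposition (cross term vanishing by centring and independence), the same two Young-inequality bounds ($\|K_\beta\|_{L^1(\R^d)}\lesssim 1$ and $\|K_\beta\|_{L^2(\R^d)}\lesssim\beta^{-d/2}$) yielding the $\min(\beta^{-1}h,1)^{d/2}$ variance term, and the same optimisation in $\beta$ with the threshold $\sigma \sim n^{-s_r/d}$ separating the two regimes. The only cosmetic difference is that in the regime $\beta\lesssim h$ you assert the two-sided $\|R_\beta\phi_i\|\sim\|\phi_i\|$, whereas only the one-sided bound $\|R_\beta\phi_i\|\lesssim\|\phi_i\|$ (valid for all $\beta$ by Young's inequality, and the only direction available without the extra assumptions of the paper's Appendix B) is needed there.
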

These two estimates are given in Proposition~\ref{upper_est_error_reg}.
The first estimate, valid in the regime $\sigma \lesssim n^{-s_r/d}$, is obtained in the limit $\beta \to 0$. This is consistent with what can be achieved by {\it not regularising}, see Proposition~\ref{estimate_noreg}.
The second estimate, valid in the regime  $\sigma \gtrsim n^{-s_r/d}$, is obtained by choosing $\beta$ according to~\eqref{common_reg}.



\subsubsection*{The effect of not regularising versus regularising via~\eqref{common_reg}}
In order to compare the effect of not regularising versus regularising via~\eqref{common_reg}, we need lower bounds. 
We explicitly compare $\sigma$ and $n$ by writing $\sigma = \sigma(n) \sim n^{-\lambda/d} \sim h^{\lambda}$. 
The parameter $\lambda \geq 0$ quantifies the noise level, with $\lambda = 0$ corresponding to the largest possible noise level, {\it i.e.} when $\sigma$ is of order $1$. In this setting, the two errors we are interested in are given by 
$$e_{\mathrm{reg}}(n) := e\left(\beta^\star(\sigma(n),n), \sigma(n), n\right)$$
and
$$e_{\mathrm{noreg}}(n)  := e\left(0, \sigma(n), n\right),$$
corresponding to {\it regularising} (via~\eqref{common_reg}), or {\it not regularising} at all, {\it i.e.} ignoring the possible effect of the noise, deemed sufficiently negligible. Theorem \ref{general_error} establishes the existence of two regimes, depending on the relative positions of $s_a$ and $s_r$. 
In the case where $s_a \leq s_r$, it is always at least as good to regularise by means of the rule~\eqref{common_reg}, see Proposition~\ref{easy_regime}. This is a rather intuitive result, since regularisation in this case is of higher order, and hence cannot jeopardise the approximation property associated to discretisation.

The interesting situation is when $s_a >s_r$, in which case we uncover regimes when the option {\it not to regularise} is actually better! 
The interplay between the different parameters is a bit intricate: it involves the following non-standard threshold 
\begin{equation}
\label{weird_threshold}
\lambda_M :=  s_a + \frac{d}{2}\left(\frac{s_a}{s_r} - 1\right).
\end{equation}
More precisely, we obtain the following regimes depending on $\lambda_M$, as follows.
\begin{figure}[h]
 \begin{centering}
\includegraphics[totalheight=0.27\textheight]{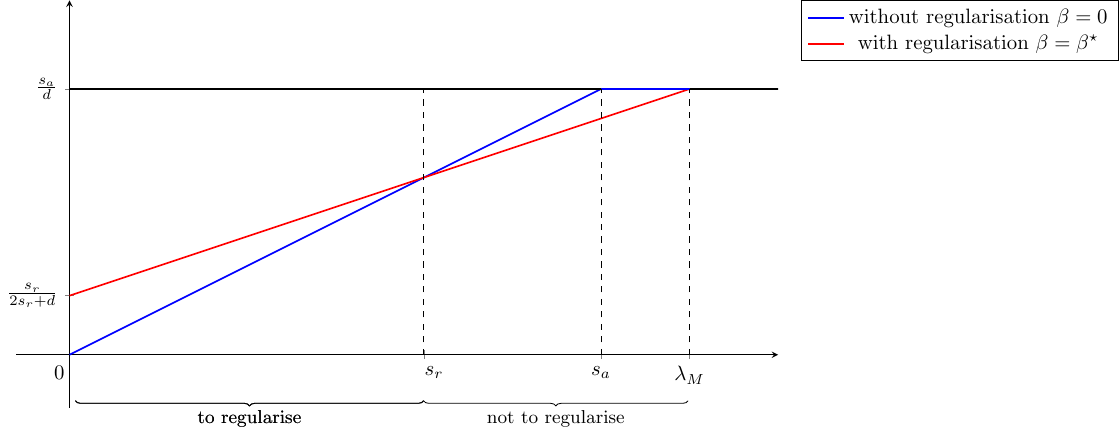}
\caption{{\it {\small For $0 \leq \lambda \leq \lambda_M$, plot of the order of convergence of $e_{\mathrm{noreg}}(n)$ and $e_{\mathrm{reg}}(n)$ towards $0$, as given by Theorem~\ref{interesting_regime}. In red, the function is $\lambda \mapsto \frac{1}{d}\min(\lambda, s_a)$, and in blue $\lambda \mapsto \frac{2\lambda+d}{2 s_r +d} \frac{s_r}{d}$. Parameters for this figure are chosen to be $d=2$, $s_a=3$, $s_r =2$, for which $\lambda_M = 3.5$.}}}
\label{fig_shakespeare}
\end{centering}
\end{figure}
\begin{theorem}
\label{interesting_regime}
Assume that $s_a > s_r$.
We have
\[ \begin{cases}e_{\mathrm{reg}}(n) \sim n^{-\frac{2\lambda+d}{2 s_r +d} \frac{s_r}{d}}  \text{ and } e_{\mathrm{noreg}}(n) \sim  n^{-\frac{\lambda}{d}}&  \text{ if } \; \lambda \leq s_a,    \\ \\
e_{\mathrm{reg}}(n)  \sim n^{-\frac{2\lambda+d}{2 s_r +d} \frac{s_r}{d}} \text{ and } e_{\mathrm{noreg}}(n)  \sim n^{-\frac{s_a}{d}} & \text{ if } \;  s_a < \lambda < \lambda_M, \\ \\
e_{\mathrm{reg}}(n)  \lesssim n^{-\frac{s_a}{d}}   \text{ and } e_{\mathrm{noreg}}(n)  \sim  n^{-\frac{s_a}{d}} & \text{ if }  \; \lambda \geq  \lambda_M.
\end{cases}.\]
\end{theorem}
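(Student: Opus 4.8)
The plan is to exploit the exact bias--variance decomposition available for any \emph{fixed} linear regularisation. Since the $\xi_i$ are independent, centred with unit variance, for every $f$ one has
\[
\mathbb E\Big[\Big\|f-R_\beta\Big(\sum_{i=1}^n (y_\sigma)_i\phi_i\Big)\Big\|^2\Big]
=\Big\|f-R_\beta\sum_{i=1}^n f(x_i)\phi_i\Big\|^2+\sigma^2\sum_{i=1}^n\|R_\beta\phi_i\|^2 ,
\]
so that
\[
e(\beta,\sigma,n)^2=\sup_{f\in\mathcal F(s,R)}\Big\|f-R_\beta\sum_{i=1}^n f(x_i)\phi_i\Big\|^2+\sigma^2\sum_{i=1}^n\|R_\beta\phi_i\|^2.
\]
In particular $e(\beta,\sigma,n)$ is bounded below by both the worst-case bias and the square root of the variance, while Theorem~\ref{general_error} shows these are the only two contributions up to constants. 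All the rates will follow by inserting $\sigma\sim n^{-\lambda/d}$ and the relevant value of $\beta$ into this identity and comparing exponents.

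For $e_{\mathrm{noreg}}(n)=e(0,\sigma,n)$ the computation is immediate: with $R_0=\mathrm{Id}$ the bias is exactly the discretisation error, whose worst case over the Sobolev ball is $\sim n^{-s_a/d}$, while $\sum_i\|\phi_i\|^2\sim 1$ gives variance $\sim\sigma^2$. Hence $e_{\mathrm{noreg}}(n)\sim\max(n^{-s_a/d},\sigma)=n^{-\min(s_a,\lambda)/d}$, which is two-sided and yields $n^{-\lambda/d}$ for $\lambda\leq s_a$ and $n^{-s_a/d}$ for $\lambda>s_a$, i.e.\ the three values claimed (this is Proposition~\ref{estimate_noreg}).

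For $e_{\mathrm{reg}}(n)=e(\beta^\star,\sigma,n)$ I first handle the upper bound by plugging $\beta^\star\sim\sigma^{2/(2s_r+d)}n^{-1/(2s_r+d)}$ and $\sigma\sim n^{-\lambda/d}$ into Theorem~\ref{general_error}. A short computation gives $(\beta^\star)^{s_r}\sim n^{-\frac{2\lambda+d}{2s_r+d}\frac{s_r}{d}}$ and, for the variance, $\sqrt{\mathrm{var}}\sim n^{-\frac{2\lambda+d}{2s_r+d}\frac{s_r}{d}}$ when $\lambda\leq s_r$ and $\sqrt{\mathrm{var}}\sim\sigma=n^{-\lambda/d}$ when $\lambda>s_r$. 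Comparing the three exponents of $n^{-s_a/d}$, $\sigma$ and $(\beta^\star)^{s_r}$ via the elementary equivalences $\frac{2\lambda+d}{2s_r+d}\frac{s_r}{d}\leq\frac{\lambda}{d}\Leftrightarrow\lambda\geq s_r$ and $\frac{2\lambda+d}{2s_r+d}\frac{s_r}{d}\leq\frac{s_a}{d}\Leftrightarrow\lambda\leq\lambda_M$ shows that the bias term dominates for $\lambda<\lambda_M$, giving $e_{\mathrm{reg}}(n)\lesssim n^{-\frac{2\lambda+d}{2s_r+d}\frac{s_r}{d}}$, while for $\lambda\geq\lambda_M$ the term $n^{-s_a/d}$ takes over, giving $e_{\mathrm{reg}}(n)\lesssim n^{-s_a/d}$. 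For the matching lower bound, needed only when $\lambda<\lambda_M$, I select $f_0\in\mathcal F(s,R)$ realising the regularisation order, $\|f_0-R_{\beta^\star}f_0\|\gtrsim(\beta^\star)^{s_r}$, and use the reverse triangle inequality together with the uniform $L^2$-boundedness of $R_\beta$ (Young's inequality) to write
\[
\Big\|f_0-R_{\beta^\star}\sum_i f_0(x_i)\phi_i\Big\|\geq\|f_0-R_{\beta^\star}f_0\|-\|R_{\beta^\star}\|_{\mathrm{op}}\,\Big\|f_0-\sum_i f_0(x_i)\phi_i\Big\|\gtrsim(\beta^\star)^{s_r}-Cn^{-s_a/d}.
\]
Since $\lambda<\lambda_M$ is precisely the condition $(\beta^\star)^{s_r}\gg n^{-s_a/d}$, the right-hand side is $\gtrsim(\beta^\star)^{s_r}$, whence $e_{\mathrm{reg}}(n)\gtrsim n^{-\frac{2\lambda+d}{2s_r+d}\frac{s_r}{d}}$ and the two-sided estimate follows in the first two regimes.

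The main obstacle is not the threshold bookkeeping above but securing the two-sided constants in the three building-block estimates: that the discretisation error and the regularisation bias are genuinely of order $n^{-s_a/d}$ and $\beta^{s_r}$ \emph{from below} (not merely above), and that $\sum_i\|R_\beta\phi_i\|^2\sim\min(\beta^{-1}n^{-1/d},1)^d$ in both the $\beta\lesssim h$ and $\beta\gtrsim h$ regimes. These lower bounds rely on the fine structure of the mollified basis --- a kernel with exactly $s_r$ vanishing moments producing a leading-order bias $\sim\beta^{s_r}$ that does not vanish on a suitable $f_0$, and the localisation and mass of the $\phi_i$ controlling the sum of the $\|R_\beta\phi_i\|^2$. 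These tight estimates are exactly what is packaged in Proposition~\ref{upper_est_error_reg}, so once they are invoked the proof reduces to the decomposition and exponent comparison described above.
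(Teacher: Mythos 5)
Your proof is correct and follows essentially the same route as the paper: the exact bias--variance decomposition, the two-sided hypotheses \eqref{equal_basis}, \eqref{equal_dis}, \eqref{equal_reg} for the no-regularisation rate (the paper's Lemma~\ref{equal_noreg}), the upper bound of Theorem~\ref{general_error} evaluated at $\beta^\star$ with the same exponent comparisons, and a lower bound on the bias obtained by subtracting the smoothed discretisation error from the regularisation bias at a worst-case $f_0$ --- your reverse-triangle-inequality step is just a cosmetic variant of the paper's Lemma~\ref{lower_easy}, which expands the square and applies Cauchy--Schwarz instead. One caveat: your closing paragraph asserts that the two-sided variance estimate $\sum_i\|R_\beta\phi_i\|^2\sim\min\bigl(\beta^{-1}n^{-1/d},1\bigr)^d$ is needed and is ``packaged in Proposition~\ref{upper_est_error_reg}'', but that proposition contains only upper bounds and the variance lower bound is never established in the paper (nor is it needed --- your own argument, like the paper's, obtains the lower bound purely from the bias term), so that claim should be dropped.
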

Figure~\ref{fig_shakespeare} gives a schematic description of the situation when $s_a > s_r$. It depicts the order of convergence to $0$ of $e_{\mathrm{reg}}(n)$ and $e_{\mathrm{noreg}}(n)$, respectively, as a function of $\lambda$, in the regime $0 \leq \lambda \leq \lambda_M$. For such values of $\lambda$, Theorem~\ref{interesting_regime} yields 
\[e_{\mathrm{noreg}}(n) \sim n^{- \frac{1}{d} \min(\lambda, s_a)}, \qquad  e_{\mathrm{noreg}}(n) \sim n^{-\frac{2\lambda +d}{2 s_r +d} \frac{s_r}{d}}.\]
 The proofs are given in Proposition~\ref{sa>sr}.\\
 
  Several remarks are in order: {\bf 1)} Theorem \ref{interesting_regime} suggests the following alternative when having to choose between {\it not regularising} versus {\it regularising} through~\eqref{common_reg}: {\it regularise} through~\eqref{common_reg} whenever $\lambda < s_r$, but {\it do not regularise} whenever $s_r< \lambda < \lambda_M$.
{\bf 2)} It is easily seen that the highest gain in not regularising occurs for $\lambda =s_a$, value for which \[e_{\mathrm{reg}}(n) \sim  n^{-\frac{2 s_a+d}{2 s_r +d} \frac{s_r}{d}} 
\qquad e_{\mathrm{noreg}}(n)  \sim n^{-\frac{s_a}{d}}\]
One can hence gain up to the order $\textstyle \frac{s_a - s_r}{2 s_r +d}$.
{\bf 3)} We also have dependence of our estimates with respect to the dimension $d$. In the limit $d \rightarrow \infty$, the regime where {\it regularising} through~\eqref{common_reg} is optimal reduces to the single value $\lambda \in \{0\}$, whereas in the regime where {\it not regularising} is better, it becomes $\lambda \in (0,  \frac{1}{2}(\frac{s_a}{s_r} - 1))$. However, the gain  in {\it not regularising} through~\eqref{common_reg} vanishes in the limit $d \rightarrow \infty$, as the maximal gain $\textstyle \frac{s_a - s_r}{2 s_r +d}$ converges to $0$. {\bf 4)} Theorem~\ref{interesting_regime} is for instance relevant to the work~\cite{Febrianto2021}, where finite element methods of order up to $s_a = 4$ are regularised with nonnegative kernels, whose order cannot exceed (and actually equals) $s_r = 2$. 

\subsection*{Organisation of the paper}
In Section~\ref{framework}, we lay out the mathematical framework and provide all the hypotheses required for our main results Theorem \ref{general_error} and  \ref{interesting_regime} to hold.  Section~\ref{upper_bounds} gathers upper bounds for the errors either with $\beta = 0$ or with fixed $\beta>0$, which lead to Theorem~\ref{general_error}. We then compare the two main strategies, thanks to lower bounds at fixed $\beta$; these results are developed in Section~\ref{lower_bounds} and yield Theorem~\ref{interesting_regime}. Finally, Section~\ref{sec_num} is devoted to numerical experiments confirming our theoretical results, by means of examples in dimension $d=1$.

\section{Mathematical framework}
\label{framework}
We work in an arbitrary fixed dimension $d \in \N^*$, with $\Omega$ a smooth bounded connected open subset of $\R^d$.
We let $H^{s}(\Omega)$ denote the fractional Sobolev space of order $s\geq 0$, endowed with its natural norm $\|\cdot\|_s$ that corresponds (for $s \in \N$) to functions having $s$ distributional derivatives in $L^2(\Omega)$. The $L^2(\Omega)$-norm is written~$\|\cdot\|$ (rather than $\|\cdot\|_0$), with inner product $\langle \cdot, \cdot \rangle$. We let $H^s_0(\Omega)$ denote the closure of the space $C^\infty_c(\Omega)$ of infinitely differentiable compactly supported functions for the $\|\cdot\|_s$ norm. For basic definitions and results on fractional Sobolev spaces, we refer to the classical paper~\cite{GuideFractionalSobolev2012}.

\subsection{Statistical model and sampling}
We wish to reconstruct (equivalently estimate nonparametrically) a function $f \in H^s_0(\Omega)$ for $s>d/2$, from $n$ noisy measurements on a fixed design of $n$ points $x_i \in \overline{\Omega}$, with $i =1, \ldots, n$. 
Thanks to the Sobolev injection $H^s(\Omega) \hookrightarrow C^0(\overline{\Omega})$ valid for $s >d/2$~\cite{GuideFractionalSobolev2012}, the sampled values $f(x_i)$ are well-defined. We correspondingly define a sampling operator 
\begin{equation} \label{eq: def sampling op}
E_n : f \in  H^s_0(\Omega) \longmapsto (f(x_i))_{1 \leq i \leq n} \in \R^n.
\end{equation}
Our noisy measurements are given by the vector $y_\sigma  \in \R^n$ via the data
\[(y_\sigma)_i  = f(x_i)  +\sigma \xi_i = (E_n f)(x_i) + \sigma \xi_i, \quad i = 1, \ldots, n.\] Here, measurement noise is modelled by independent random variables $\sigma \xi_i$, $i = 1,\ldots, n$, where the $\xi_i$ are centred with unit variance.




\subsection{Discretisation}

Recall that the variable $h$ is related to $n$ by $h \sim n^{-1/d}$. We sometimes prefer to give our estimates in terms of $h$ rather than $n$, since the parameters $h$ and $\beta$ are homogeneous and therefore naturally compare. We assume that we are given a discretisation operator $P_n : \R^n \to L^2(\Omega)$ defined by means of \textit{basis functions} $\phi_i \in C^0(\overline{\Omega})$,  $i = 1, \ldots, n$.
via the reconstruction formula
\begin{equation} \label{eq: def discretisation op}
\forall z \in \R^n, \qquad P_n z = \sum_{i=1}^n z_i \phi_i.
\end{equation}
We will throughout assume that the basis functions are positive in a neighbourhood of size about $h$ around $x_i$, and vanish outside of a larger neighbourhood still of size about $h$. Our precise hypothesis reads as follows: there exist $m>0$, $C >c>0$ independent of $i$ and $n$ such that
\begin{equation}
\label{sup_base}
\phi_i(x_i  + h z)   \begin{cases} \geq m & \text{for } |z| \leq c \\
= 0  \quad &  \text{for } |z| \geq C.
\end{cases}
\end{equation}
All the symbols $\lesssim$ and $\sim$ below should also be understood to be uniform with respect to $i = 1, \ldots, n$.

In particular,  \eqref{sup_base} ensures the inclusion $B(x_i, c h) \subset  \supp{\phi_i} \subset B(x_i,  C h)$,
where $B(x_0,r) = \{x\in \overline{\Omega}, \, |x-x_0| \leq r\}$ denotes the closed Euclidean ball with center $x_0$ and radius $r\geq 0$.
We moreover assume
\begin{equation}
\label{pointwise}
\|\phi_i\|_{L^\infty(\Omega)} \lesssim 1,
\end{equation}
which in turn entails the estimates\footnote{
For the lower bounds, the first inequality of~\eqref{sup_base} entails $\int_{\Omega} |\phi_i(x)|^2\,dx  \geq m^2 |B(x_i, ch)| \sim h^d$,  $\int_{\Omega} |\phi_i(x)|\,dx  \geq m|B(x_i, ch)| \sim h^d$ which shows $\|\phi_i\| \gtrsim h^{d/2}$, and $\|\phi_i\|_{L^1(\Omega)} \gtrsim h^{d}$.  The uniform compact support given by~\eqref{sup_base} combined with~\eqref{pointwise} leads to $\|\phi_i\|^2 = \int_{\Omega} |\phi_i(x)|^2\,dx \lesssim |B(x_i, Ch)| \sim h^d$,  $\|\phi_i\|_{L^1(\Omega)} = \int_{\Omega} |\phi_i(x)|\,dx \lesssim |B(x_i, Ch)| \sim h^d$. }
\begin{equation}
\label{equal_basis}
\tag{$H_\phi^\sim$}
\|\phi_i\| \sim h^{d/2}\;\;\text{and}\;\;\|\phi_i\|_{L^1(\Omega)} \sim h^{d}.
\end{equation}
Note that the estimates~\eqref{equal_basis} are those essential for our results. We introduce the sufficient hypotheses~\eqref{sup_base} and~\eqref{pointwise} explicitly because they are more easily checked in practice. \\


We have a natural notion of accuracy of reconstruction that combine both the discretisation operator $P_n$ defined in \eqref{eq: def discretisation op} and the sampling operator $E_n$ defined in \eqref{eq: def sampling op}. 
\begin{definition} \label{def: orders}
We say that the discretisation-sampling pair $(P_n,E_n)$ has order (at least) $s_a >0$, if for every $s \geq s_a$,
\begin{equation}
\label{upper_dis}\norm{P_n E_n f - f} \lesssim \|f\|_{s_a} h^{s_a}.
\end{equation}
for every $f\in H_0^s(\Omega)$. 
We say that the discretisation-sampling pair $(P_n,E_n)$ has order exactly $s_a$ if
\begin{equation}
\label{equal_dis}
\tag{$H_a^\sim$}
\sup_{f \in \mathcal{F}(s,R)} \norm{P_n E_n f - f} \sim  h^{s_a},
\end{equation}
for every $R>0$, $s \geq s_a$.
\end{definition}
We use Sobolev balls as smoothness classes:
\begin{equation}
\label{sobolev_balls} \mathcal{F}(s,R) := \big\{f \in H^s_0(\Omega),  \; \norm{f}_{s} \leq R\big\},\;\;R>0.
\end{equation}
It is known that under fairly general hypotheses, $\mathbb{P}_k$ finite elements satisfy~\eqref{equal_dis} with $s_a = k+1$, see for instance \cite{ThomeeBookFEM2007}.

\subsection{Regularisation}
Pick a smooth and compactly supported kernel $K$ over $\R^d$, that satisfies in particular 
\[\int_{\R^d} K(x)\,d x = 1.
\]
We let $K_\beta := \beta^{-d} K(\beta^{-1} \cdot)$, and we note that 
\[\|K_\beta\|_{L^1(\R^d)} \lesssim 1, \quad \|K_\beta\|_{L^2(\R^d)} \lesssim \beta^{-d/2}.\]

 For a function $f \in L^2(\Omega)$, we define the convolution  
\[\forall x \in \R^d, \quad (K_\beta \ast f)(x) =  \int_\Omega K_\beta(x-y) f(y)\,dy.\]

We also assume that $K$ reproduces moments up to the degree $s_r -1\in \N^*$, but does not reproduce at least one moment of degree $s_r$, \textit{i.e.}, 
\begin{align}
\label{eq: def sr}
\begin{split}
\forall (r_1, \ldots, r_d) \in \N^d,  \; 1 \leq |r| \leq s_r-1, \qquad \int_{\R^d} x_{1}^{r_1} \ldots x_{d}^{r_d} \, K(x) \, dx = 0, \\
\exists (r_1, \ldots, r_d) \in \N^d,  \; |r| = s_r, \qquad \int_{\R^d} x_{1}^{r_1} \ldots x_{d}^{r_d} \, K(x) \, dx \neq 0.
\end{split}
\end{align}
 
Under the above assumptions and if $s \geq s_r$, we have
\begin{equation}
\label{upper_reg}
\sup_{f \in \mathcal{F}(s,R)}  \norm{K_\beta \ast  f - f}  \lesssim   \beta^{s_r}.
\end{equation}
In fact, the estimate above is sharp thanks to the assumption that $K$ does not reproduce at least one moment of degree $s_r$. In other words, for all $s \geq s_r$ and $R>0$ we have
\begin{equation}
\label{equal_reg}
\tag{$H_r^\sim$}
\sup_{f \in \mathcal{F}(s,R)}  \norm{K_\beta \ast  f - f}  \sim   \beta^{s_r}.
\end{equation}
Although these estimates are common, one is usually interested in the upper bound~\eqref{upper_reg}, with $\Omega = \R^d$ and integer parameter $s$.  For completeness, we thus provide a proof of~\eqref{equal_reg} in our setting, which we postpone to Appendix~\ref{app_fractional}.

\begin{remark}
Many common kernels (integrating to $1$) are nonnegative ($K \geq 0$) and symmetric $(K(x) = K(-x)$ for all $x \in \R^d$). The nonnegativity assumption prevents one from numerical instabilities.
However, these kernels are of order $s_r = 2$ and not more since some moments of order $2$ are not reproduced. 
\end{remark}



\subsubsection*{Recapitulating our assumptions}
From now on, we always assume that~\eqref{equal_basis},~\eqref{equal_dis} and~\eqref{equal_reg} hold.
Some of the results will in fact require weaker hypotheses, for instance in the form of upper bounds $\lesssim$ rather than equality $\sim$. The proof of each specific result will make it clear what is actually necessary for the claimed statement to hold.

\subsection{Reconstruction errors}
\subsubsection*{The case with no regularisation}
The first estimator is given by $P_n y_\sigma$. The corresponding error is 
\begin{equation} \label{eq: def noreg}
e_{\mathrm{noreg}}(\sigma, h) := \sup_{f \in \mathcal{F}(s,R)}\E{ \norm{ P_n y_\sigma- f}^2}^{1/2}. 
\end{equation}

\subsubsection*{The case with regularisation}
The second estimator consists in adding regularisation, the estimator being now given by $K_\beta \ast (P_n y_\sigma)$. The corresponding errors is 
$$e_{\mathrm{optreg}}(\sigma, h) := \inf_{\beta>0} e(\beta, \sigma, h),$$
with
$$e(\beta, \sigma,h) := \sup_{f \in \mathcal{F}(s,R)}\E{ \norm{K_\beta \ast P_n y_\sigma- f}^2}^{1/2}.$$

\section{Upper estimates}
\label{upper_bounds}
\subsection{The case with no regularisation ($\beta = 0$)}
We first analyse the error $e_{\mathrm{noreg}}(\sigma, h)$, associated to the estimator $P_n y_\sigma$ when no regularisation is involved.
\begin{proposition}
\label{estimate_noreg}
Assume that $s \geq s_a$. For every $R>0$, we have
\begin{align*}
e_{\mathrm{noreg}}(\sigma, h)\lesssim  \sigma +  h^{s_a}.
\end{align*}
\end{proposition}
\begin{proof}
Writing \[P_n y_{\sigma} - f =\left(P_n y_{\sigma} - P_n y\right) + \left(P_n y  - f\right) = P_n (y_{\sigma} - y) + \left(P_n E_n f  - f\right),\]
since the random variables $\xi_i$ are centred with unit variance, we obtain
\begin{align*}
\E{ \norm{ P_n y_\sigma- f}^2} & = \E{\norm{P_n (y_\sigma - y)}^2} +  \norm{ P_n E_n f- f}^2\\ 
& = \sigma^2 \mathbb E\big[\big\| \sum_{i=1}^n \xi_i \phi_i \big\|^2 \big]+  \|P_n E_n f- f\|^2 \\
& = \sigma^2 \sum_{i=1}^n \norm{ \phi_i}^2  +   \norm{ P_n E_n f- f}^2. 
\end{align*}
Owing to $\norm{ \phi_i}^2 \lesssim h^{d}$ which follows from~\eqref{equal_basis} we derive
\begin{align*}
\E{ \norm{ P_n y_\sigma- f}^2} \lesssim  \sigma^2 h^d n +   \norm{ P_n E_n f- f}^2 \sim \sigma^2 +   \norm{ P_n E_n f- f}^2.
\end{align*}
Using Assumption~\eqref{upper_dis}, taking square root and supremum over $f \in \mathcal{F}(s,R)$, we obtain the result.
\end{proof}

\subsection{The case with regularisation}
We now study $e_{\mathrm{optreg}}(\sigma, h)$, associated with the estimator $K_\beta \ast P_n y_\sigma$. 
Note first that Young's inequality yields
$$\norm{K_\beta \ast f} \leq \norm{K_\beta}_{L^1(\R^d)} \|f\|_{L^2(\Omega)}$$
and
$$\norm{K_\beta \ast f} \leq \norm{K_\beta}_{L^2(\R^d)} \|f\|_{L^1(\Omega)}.$$
\begin{proposition}
\label{upper_est_error_reg}
Assume that $s \geq \max(s_r, s_a)$. For every $R>0$, we have
\begin{equation} \label{eq: first bound}
e(\beta, \sigma, h)  \lesssim \sigma \min(\beta^{-1} h,1)^{d/2} + h^{s_a}  + \beta^{s_r}.
\end{equation}
In particular
 \begin{align*}e_{\mathrm{optreg}}(\sigma, h) & \lesssim
 \left\{
   \begin{array}{lll} 
 \sigma +  h^{s_a} & \text{ if } \sigma \lesssim h^{s_r},   \\ \\
\sigma^{2 s_r/(2 s_r +d)}h^{d s_r/(2 s_r +d)} + h^{s_a} & \text{ otherwise.}
\end{array}
\right.
 \end{align*}
\end{proposition}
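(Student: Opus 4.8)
The plan is to bound the error by a bias-variance decomposition, treating the regularisation operator $K_\beta \ast$ and the discretisation-sampling pair separately. First I would split the estimator's deviation from $f$ into a stochastic part (coming from the noise $\sigma \xi_i$) and a deterministic part (coming from discretisation and regularisation bias). Writing $y_\sigma = y + \sigma \xi$ with $y = E_n f$, linearity of $P_n$ and of convolution gives
\[
K_\beta \ast P_n y_\sigma - f = \sigma\, K_\beta \ast P_n \xi + \bigl(K_\beta \ast P_n E_n f - f\bigr).
\]
Since the $\xi_i$ are centred, the cross term vanishes in expectation and the Pythagorean identity yields
\[
\E{\norm{K_\beta \ast P_n y_\sigma - f}^2} = \sigma^2\, \E{\norm{K_\beta \ast P_n \xi}^2} + \norm{K_\beta \ast P_n E_n f - f}^2,
\]
so it suffices to bound each term and take square roots, using $\sqrt{a+b} \leq \sqrt a + \sqrt b$.

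For the deterministic term I would further split $K_\beta \ast P_n E_n f - f = K_\beta \ast (P_n E_n f - f) + (K_\beta \ast f - f)$. The second summand is controlled by the regularisation estimate~\eqref{upper_reg}, giving a contribution $\lesssim \beta^{s_r}$ (this needs $s \geq s_r$). For the first summand, Young's inequality $\norm{K_\beta \ast g} \leq \norm{K_\beta}_{L^1} \norm{g}$ together with $\norm{K_\beta}_{L^1} \lesssim 1$ reduces it to $\norm{P_n E_n f - f}$, which is $\lesssim h^{s_a}$ by~\eqref{upper_dis} (this needs $s \geq s_a$). These two account for the $h^{s_a} + \beta^{s_r}$ terms.

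The stochastic term is where the $\min(\beta^{-1} h, 1)^{d/2}$ factor is earned, and this is the main point of the argument. By independence, $\E{\norm{K_\beta \ast P_n \xi}^2} = \sum_{i=1}^n \norm{K_\beta \ast \phi_i}^2$. For each $i$ I would use whichever of the two Young inequalities is sharper: the $L^1 \to L^2$ bound gives $\norm{K_\beta \ast \phi_i} \lesssim \norm{\phi_i} \sim h^{d/2}$, while the $L^2 \to L^2$ bound on the dual side gives $\norm{K_\beta \ast \phi_i} \leq \norm{K_\beta}_{L^2} \norm{\phi_i}_{L^1} \lesssim \beta^{-d/2} h^d$, using $\norm{K_\beta}_{L^2} \lesssim \beta^{-d/2}$ and $\norm{\phi_i}_{L^1} \sim h^d$ from~\eqref{equal_basis}. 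Taking the minimum of these per term and summing over the $n \sim h^{-d}$ indices gives
\[
\sigma^2 \sum_{i=1}^n \norm{K_\beta \ast \phi_i}^2 \lesssim \sigma^2\, h^{-d} \min\bigl(h^{d}, \beta^{-d} h^{2d}\bigr) = \sigma^2 \min\bigl(1, (\beta^{-1} h)^{d}\bigr),
\]
whose square root is exactly $\sigma \min(\beta^{-1} h, 1)^{d/2}$. This establishes~\eqref{eq: first bound}. The subtlety here is recognising that the two Young inequalities trade off at $\beta \sim h$, and that one must pick the per-term minimum rather than crudely using one bound throughout; I expect verifying the homogeneity of this tradeoff to be the step requiring the most care.

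Finally, to obtain $e_{\mathrm{optreg}}$ I would optimise~\eqref{eq: first bound} over $\beta > 0$. When $\sigma \lesssim h^{s_r}$ the infimum is attained as $\beta \to 0$, where $\min(\beta^{-1}h,1)^{d/2} = 1$ and the $\beta^{s_r}$ term vanishes, yielding $\sigma + h^{s_a}$. Otherwise, in the regime $\beta \gtrsim h$ where $\min(\beta^{-1}h,1) = \beta^{-1}h$, the relevant balance is between the variance term $\sigma \beta^{-d/2} h^{d/2}$ and the regularisation bias $\beta^{s_r}$; equating them gives the standard choice $\beta \sim \sigma^{2/(2s_r+d)} h^{d/(2s_r+d)}$, consistent with~\eqref{common_reg} since $h \sim n^{-1/d}$, and substituting back produces the term $\sigma^{2s_r/(2s_r+d)} h^{d s_r/(2s_r+d)}$, to which the $h^{s_a}$ discretisation term is simply added. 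One should check that this optimal $\beta$ indeed satisfies $\beta \gtrsim h$ precisely in the regime $\sigma \gtrsim h^{s_r}$, which closes the case distinction.
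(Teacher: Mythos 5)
Your proposal is correct and follows essentially the same route as the paper's proof: the same variance--bias decomposition, the same splitting of the bias into discretisation and regularisation parts controlled via Young's inequality and \eqref{upper_dis}, \eqref{upper_reg}, the same pair of Young bounds (in $L^1$ and $L^2$) for $\norm{K_\beta \ast \phi_i}$, and the same optimisation in $\beta$ with the threshold $\beta^\star(\sigma,h) \lesssim h \iff \sigma \lesssim h^{s_r}$. The only cosmetic difference is that you take the per-term minimum of the two variance bounds directly, whereas the paper states the two resulting estimates separately before combining them into \eqref{eq: first bound}.
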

The above alternative is obtained by letting $\beta \to 0$ and $\beta = \beta^\star(\sigma, h)$ respectively, with 
\[\beta^\star(\sigma, h)  \sim \sigma^{2/(2 s_r +d)}h^{d/(2 s_r +d)}.\]
\begin{proof}
In the same way as in the proof of Proposition \ref{estimate_noreg}, we have
\[\mathbb E\big[\big\|K_\beta \ast  P_n y_\sigma- f \big\|^2\big]  = \sigma^2 \sum_{i=1}^n \norm{ K_\beta \ast \phi_i}^2   + \norm{ K_\beta \ast P_n y- f}^2.\]
The second term may be estimated thanks to~\eqref{upper_dis} and~\eqref{upper_reg}. This yields 
\begin{align*} \norm{ K_\beta \ast P_n y- f} & \leq \norm{K_\beta \ast (P_n E_n f - f)} + \norm{K_\beta \ast f  - f} \\
&  \leq \norm{K_\beta}_{L^1(\R^d)} \norm{P_n E_n f -f}  + \norm{K_\beta \ast f  - f}  \\
& \lesssim  \norm{P_n E_n f -f}  + \norm{K_\beta \ast f  - f}   \\
&  \lesssim   \|f\|_{s_a} h^{s_a}  + \|f\|_{s_r}  \beta^{s_r}. 
\end{align*}
Two upper bounds may be derived for the first term, by means of two applications of Young's inequality, together with~\eqref{equal_basis}, namely
\[ \norm{K_\beta \ast \phi_i}^2 \leq \norm{K_\beta}_{L^1(\R^d)}^2  \norm{\phi_i}^2 \lesssim  \norm{\phi_i}^2 \lesssim h^{d}\]
and
\[ \norm{K_\beta \ast \phi_i}^2 \leq \norm{K_\beta}_{L^2(\R^d)} ^2  \norm{\phi_i}_{L^1(\Omega)}^2 \lesssim \beta^{-d} \norm{\phi_i}_{L^1(\Omega)}^2 \lesssim \beta^{-d}  h^{2d}.\]
The first choice leads to the following bound, valid for any $\beta>0$:
\begin{equation} \label{eq: second estimate}
e(\beta, \sigma, h)   \lesssim \sigma + h^{s_a}  + \beta^{s_r}.
\end{equation}
The second choice leads to 
\begin{equation} \label{eq: second estimate bis}
e(\beta, \sigma, h)   \lesssim \sigma \beta^{-d/2} h^{d/2} + h^{s_a}  + \beta^{s_r}.
\end{equation}
Combining the two estimates, we obtain \eqref{eq: first bound}.
Let us now minimise \eqref{eq: second estimate} and \eqref{eq: second estimate bis} with respect to $\beta$. For $\beta \lesssim h$, \eqref{eq: second estimate} is sharper while for $\beta \gtrsim h$ \eqref{eq: second estimate} prevails. For \eqref{eq: second estimate} we achieve a minimum or order $\sigma + h^{s_a}$ by letting $\beta \to 0$. Taking derivatives, \eqref{eq: second estimate bis}  is minimal for $\beta \sim \beta^\star(\sigma,h)$ with corresponding minimum of order $\sigma^{2 s_r/(2 s_r +d)}h^{d s_r/(2 s_r +d)} + h^{s_a}$. Finally, $\beta^\star(\sigma, h) \lesssim h$ if and only if $\sigma \lesssim h^{s_r}$, from which we infer 
\begin{align*}e_{\mathrm{optreg}}(\sigma, h) &= \inf_{\beta>0} \sup_{f \in \mathcal{F}(s,R)} \E{ \norm{K_\beta \ast P_n y_\sigma- f}_2^2}^{1/2}  \\
&  \lesssim \begin{cases} \sigma +  h^{s_a} & \text{ if } \sigma \lesssim h^{s_r}   \\ \\
\sigma^{2 s_r/(2 s_r +d)}h^{d s_r/(2 s_r +d)} + h^{s_a} & \text{ else}
\end{cases} \\
&  = \min(\sigma,\sigma^{2 s_r/(2 s_r +d)}h^{d s_r/(2 s_r +d)})+ h^{s_a}.
\end{align*}

\end{proof}

\section{Regularisation versus no regularisation}
\label{lower_bounds}
We wish to compare the effect of {\it not regularising} ({\it i.e.} $\beta =0$) versus {\it regularising}, with the (optimal) choice 
\begin{equation}
\label{reg_choice}
\beta = \beta^\star(\sigma,h) \sim \sigma^{2/(2 s_r +d)}h^{d/(2 s_r +d)}.
\end{equation}
In order to do so, we establish lower bounds for 
$e_{\mathrm{noreg}}(\sigma, h)$ defined in \eqref{eq: def noreg} and 
$$e_{\mathrm{reg}}(\sigma, h):= e(\beta^\ast(\sigma, h), \sigma, h).$$

\subsection{Estimates from below}
\begin{lemma}
\label{equal_noreg}
Assume $s \geq s_a$. For every $R>0$, we have: 
\[e_{\mathrm{noreg}}(\sigma, h) \sim \sigma + h^{s_a}.\]
\end{lemma}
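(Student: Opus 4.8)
The statement combines an upper and a lower bound. The upper bound $e_{\mathrm{noreg}}(\sigma, h) \lesssim \sigma + h^{s_a}$ is already furnished by Proposition~\ref{estimate_noreg}, so the whole task reduces to the matching lower bound. The plan is to revisit the exact bias–variance decomposition established in the proof of Proposition~\ref{estimate_noreg} and to exploit the \emph{sharp} versions of our hypotheses, namely~\eqref{equal_basis} and~\eqref{equal_dis}, to upgrade the $\lesssim$ into a $\sim$.

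First I would recall that, because the $\xi_i$ are independent, centred, with unit variance, the computation in Proposition~\ref{estimate_noreg} yields the \emph{exact} identity
\[
\E{\norm{P_n y_\sigma - f}^2} = \sigma^2 \sum_{i=1}^n \norm{\phi_i}^2 + \norm{P_n E_n f - f}^2.
\]
The key structural observation is that the variance term $\sigma^2 \sum_{i=1}^n \norm{\phi_i}^2$ does not depend on $f$. Consequently the supremum over $\mathcal{F}(s,R)$ splits cleanly, the constant variance term simply passing through:
\[
e_{\mathrm{noreg}}(\sigma, h)^2 = \sigma^2 \sum_{i=1}^n \norm{\phi_i}^2 + \sup_{f \in \mathcal{F}(s,R)} \norm{P_n E_n f - f}^2.
\]

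It then remains to evaluate the two terms separately. For the variance, the sharp estimate $\norm{\phi_i}^2 \sim h^d$ from~\eqref{equal_basis}, together with $n \sim h^{-d}$, gives $\sigma^2 \sum_{i=1}^n \norm{\phi_i}^2 \sim \sigma^2 h^d n \sim \sigma^2$; here it is the lower bound $\norm{\phi_i} \gtrsim h^{d/2}$ that is used, which itself rests on the first inequality in~\eqref{sup_base}. For the bias, I would invoke the order-exactly-$s_a$ hypothesis~\eqref{equal_dis}, which gives $\sup_{f} \norm{P_n E_n f - f}^2 \sim h^{2 s_a}$. Combining the two, $e_{\mathrm{noreg}}(\sigma, h)^2 \sim \sigma^2 + h^{2 s_a}$, and a square root together with the elementary equivalence $\sqrt{a^2 + b^2} \sim a + b$ for $a, b \geq 0$ concludes.

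There is no genuine obstacle here: the argument is essentially bookkeeping layered on top of Proposition~\ref{estimate_noreg}. The only points deserving care are that the lower bound genuinely needs the two-sided (sharp) hypotheses~\eqref{equal_basis} and~\eqref{equal_dis} rather than the mere upper bounds~\eqref{upper_dis} used for Proposition~\ref{estimate_noreg}, and that the variance term must appear as an exact equality (not merely $\lesssim$), which is precisely why the independence and unit-variance assumptions on the $\xi_i$ are indispensable.
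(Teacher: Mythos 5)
Your proposal is correct and follows essentially the same route as the paper: the paper's proof also rests on the exact bias--variance identity $e_{\mathrm{noreg}}(\sigma, h)^2 = \sigma^2 \sum_{i=1}^n \norm{\phi_i}^2 + \sup_{f \in \mathcal{F}(s,R)} \norm{P_n E_n f - f}^2$ combined with the two-sided hypotheses~\eqref{equal_basis} and~\eqref{equal_dis}. You merely spell out the details (the splitting of the supremum, $n h^d \sim 1$, and $\sqrt{a^2+b^2} \sim a + b$) that the paper leaves implicit.
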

\begin{proof}
Recall the identity  \[e_{\mathrm{noreg}}(\sigma, h)^2  =  \sigma^2 \sum_{i=1}^n \norm{ \phi_i}^2  +  \sup_{f \in \mathcal F_{s,R}}  \norm{ P_n E_n f- f}^2,\]
from which the result follows thanks to~\eqref{equal_basis} and~\eqref{equal_dis}.
\end{proof}

Observe in particular the identity
\[e(\beta, \sigma, h)^2   = \sigma^2 \sum_{i=1}^n \norm{ K_\beta \ast \phi_i}^2   + \sup_{f \in \mathcal F(s,R)}  \norm{ K_\beta \ast P_n y- f}^2.\]
\begin{lemma}
\label{lower_easy}
Let $s \geq \max(s_r,s_a)$. For every $R>0$, we have
\[e(\beta, \sigma,h) \gtrsim  \beta^{s_r/2} (\beta^{s_r/2} - h^{s_a/2}).\]
\end{lemma}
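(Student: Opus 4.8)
The plan is to discard the (nonnegative) stochastic term and reduce the claim to a lower bound on the deterministic bias. Starting from the identity recalled just above the statement,
\[ e(\beta,\sigma,h)^2 = \sigma^2 \sum_{i=1}^n \| K_\beta \ast \phi_i\|^2 + \sup_{f \in \mathcal F(s,R)} \| K_\beta \ast P_n E_n f - f\|^2, \]
I would simply drop the first summand, which is $\geq 0$, so that $e(\beta,\sigma,h) \geq \sup_{f \in \mathcal F(s,R)} \| K_\beta \ast P_n E_n f - f\|$. It then suffices to exhibit a single admissible $f$ for which the bias is of the claimed order; no uniformity over the whole class is needed for a lower bound.

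Next I would split the bias into its two natural contributions, the mollification error and the discretisation error, writing for every $f \in \mathcal F(s,R)$
\[ K_\beta \ast P_n E_n f - f = (K_\beta \ast f - f) + K_\beta \ast (P_n E_n f - f), \]
and apply the reverse triangle inequality
\[ \| K_\beta \ast P_n E_n f - f\| \geq \| K_\beta \ast f - f\| - \| K_\beta \ast (P_n E_n f - f)\|. \]
The second term is controlled uniformly in $f$: Young's inequality with $\|K_\beta\|_{L^1(\R^d)} \lesssim 1$, together with the order-$s_a$ estimate \eqref{upper_dis}, gives $\| K_\beta \ast (P_n E_n f - f)\| \lesssim \|f\|_{s_a}\, h^{s_a} \lesssim h^{s_a}$, using $\|f\|_{s_a} \leq \|f\|_s \leq R$ since $s \geq s_a$. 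For the first term I would invoke the \emph{sharpness} of the mollification estimate \eqref{equal_reg}: since $\sup_{f}\| K_\beta \ast f - f\| \sim \beta^{s_r}$, there is an admissible $f_\beta \in \mathcal F(s,R)$ with $\| K_\beta \ast f_\beta - f_\beta\| \gtrsim \beta^{s_r}$. Evaluating the previous display at $f_\beta$ then yields $e(\beta,\sigma,h) \geq c\,\beta^{s_r} - C\,h^{s_a}$ for fixed constants $c,C>0$ inherited from \eqref{equal_reg} and \eqref{upper_dis}.

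Finally, to land on the stated factored form, I would use the algebraic identity
\[ \beta^{s_r} - h^{s_a} = (\beta^{s_r/2} - h^{s_a/2})(\beta^{s_r/2} + h^{s_a/2}) \geq \beta^{s_r/2}(\beta^{s_r/2} - h^{s_a/2}), \]
the last inequality holding whenever $\beta^{s_r/2} \geq h^{s_a/2}$, while in the complementary regime the claimed right-hand side is $\leq 0$ and the bound is trivial. The main obstacle, and the reason the statement is phrased through the product $\beta^{s_r/2}(\beta^{s_r/2}-h^{s_a/2})$ rather than through $\beta^{s_r}$ alone, lies in the cross term discarded by the reverse triangle inequality: the mollification bias and the discretisation error may partially cancel, so a positive lower bound is only guaranteed once $\beta^{s_r}$ exceeds $h^{s_a}$ by a fixed multiplicative constant. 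The factored expression is precisely what survives this cancellation, and keeping explicit track of the constants $c,C$ above is the one point requiring genuine care; everything else is routine.
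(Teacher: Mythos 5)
Your proposal is correct and takes essentially the same route as the paper's proof: both discard the nonnegative stochastic term, decompose the bias as $(K_\beta \ast f - f) + K_\beta \ast (P_n E_n f - f)$, bound the second piece by $h^{s_a}$ via Young's inequality and \eqref{upper_dis}, bound the first from below via the sharpness half of \eqref{equal_reg}, and finish with the same algebra. The only cosmetic difference is that you apply the reverse triangle inequality at a near-maximising $f_\beta$ whereas the paper expands the squared norm and controls the cross term by Cauchy--Schwarz (squaring your inequality recovers exactly the paper's bound), and the constant-tracking subtlety you flag at the end is present---and handled no more explicitly---in the paper's own final step.
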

\begin{proof}
For $f \in \mathcal{F}(s,R)$, we write 
\begin{align*} \|K_\beta \ast P_n& y- f\|^2 = \norm{ K_\beta \ast (P_n E_n f-f) + (K_\beta \ast f -f)}^2 \\
&= \norm{ K_\beta \ast (P_n E_n f-f)}^2 + 2 \langle K_\beta \ast (P_n E_n f-f), (K_\beta \ast f -f)\rangle +   \norm{K_\beta \ast f -f}^2 \\
& \geq 2 \langle K_\beta \ast (P_n E_n f-f), (K_\beta \ast f -f)\rangle +   \norm{K_\beta \ast f -f}^2 \\ 
& \geq - 2\norm{ K_\beta \ast (P_n E_n f-f)}  \norm{K_\beta \ast f -f} +  \norm{K_\beta \ast f -f}^2  \\
&  \gtrsim - 2\norm{ P_n E_n f-f} \norm{K_\beta \ast f -f}  +  \norm{K_\beta \ast f -f}^2  \\
& \gtrsim -2 \|f\|_{s_a} h^{s_a} \|f\|_{s_r} \beta^{s_r}+\norm{K_\beta \ast f -f}^2\\
& \geq - 2 R^2 h^{s_a} \beta^{s_r} + \norm{K_\beta \ast f -f}^2,
  \end{align*}
  where we used both~\eqref{upper_dis} and~\eqref{upper_reg}. Thanks to~\eqref{equal_reg}, this yields
\begin{align*} \sup_{f \in \mathcal F(s,R)} \norm{ K_\beta \ast P_n y- f}_2^2 & \gtrsim- h^{s_a}\beta^{s_r} + \sup_{f \in \mathcal F(s,R)}\norm{K_\beta \ast f -f}_2^2 \\
& \gtrsim-  h^{s_a}\beta^{s_r} + \beta^{2 s_r} = \beta^{s_r} (\beta^{s_r} - h^{s_a}),
\end{align*}
and finally
\[e(\beta, \sigma,h) \gtrsim  \beta^{s_r/2} (\beta^{s_r/2} - h^{s_a/2}).\]
\end{proof}

\begin{remark}
A more comprehensive understanding of lower bounds for errors at fixed $\beta>0$ would notably require lower estimates for the norms $\|K_\beta \ast \phi_i\|$. We were only able to establish such estimates under restrictive assumptions, namely when $K \geq 0$ and assuming $\beta = \beta(h) = o(h)$. Since this result is only partial and does not happen to be necessary for the comparison between the two analysed strategies (not regularising or regularising through~\eqref{common_reg}), we delay these estimates until Appendix~\ref{app_further}.
\end{remark}

In order to compare $e_{\mathrm{noreg}}(\sigma, h)$ and $e_{\mathrm{reg}}(\sigma, h):= e(\beta^\ast(\sigma, h), \sigma, h)$ as functions of the noise level $\sigma$, we let 
\[\sigma = \sigma(h) \sim h^\lambda, \quad \text{ with} \quad \lambda \geq 0.\] 
It follows that 
\[\beta^\star(\sigma, h) \sim  \sigma^{2/(2 s_r +d)}h^{d/(2 s_r +d)} \sim h^{\frac{2\lambda+d}{2 s_r +d}}\] now only depends on $h$. For conciseness, we write $\beta^\star(h) = h^{\frac{2\lambda+d}{2 s_r +d}}$.

Both errors now depend on $h$ solely; abusing notation slightly, we write $e_{\mathrm{noreg}}(h)$ and $e_{\mathrm{reg}}(h)$, respectively.  Under~\eqref{equal_dis} and according to Lemma~\ref{equal_noreg}, 
\begin{align} \label{noreg_est}
\begin{split}
e_{\mathrm{noreg}}(h) \sim  \sigma(h) + h^{s_a} & \sim 
 \begin{cases} h^{\lambda}  & \text{ if } \lambda \leq s_a \\  \\ h^{s_a}  & \text{ if } \lambda > s_a\end{cases} \\
 &= h^{\min(\lambda, s_a)}.
 \end{split}
 \end{align}
We will also need the following elementary useful facts:
$$\beta^\star(h)^{s_r} \lesssim \sigma(h) \iff h\lesssim  \beta^\star(h) \iff  \lambda \leq s_r,$$
and 
$$h^{s_a} \lesssim \beta^\star(h)^{s_r} \iff \lambda \leq \lambda_M$$ where
\[\lambda_M := s_a + \frac{d}{2}\left(\frac{s_a}{s_r} - 1\right).\]
We note that if  $s_a \leq s_r$, we may have that $\lambda_M$ is negative and $\lambda_M\leq s_a \leq s_r$, whereas if $s_a >s_r$, then $s_r < s_a< \lambda_M$ always.

\begin{proposition}
\label{est_reg}
Assume that $s \geq \max(s_r,s_a)$. For every $R>0$, we have
\begin{align*}  \begin{cases} e_{\mathrm{reg}}(h) \sim  h^{\frac{2\lambda+d}{2 s_r +d} s_r}  & \text{ if } \lambda \leq \lambda_M,\\ \\
e_{\mathrm{reg}}(h) \lesssim h^{s_a}  & \text{ if } \lambda > \lambda_M.
\end{cases}
\end{align*}
\end{proposition}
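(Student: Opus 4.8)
The plan is to derive the two cases by combining the upper bound of Proposition~\ref{upper_est_error_reg}, evaluated at $\beta = \beta^\star(h)$, with the lower bound of Lemma~\ref{lower_easy}. Throughout I use $\sigma = \sigma(h) \sim h^\lambda$ and $\beta^\star(h) = h^{\frac{2\lambda+d}{2s_r+d}}$, so that $\beta^{\star s_r} = h^{\frac{(2\lambda+d)s_r}{2s_r+d}}$, together with the two elementary facts recorded above: $\beta^\star(h) \gtrsim h \iff \lambda \le s_r$ (equivalently $\beta^{\star s_r} \lesssim \sigma$), and $h^{s_a} \lesssim \beta^{\star s_r} \iff \lambda \le \lambda_M$.

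For the upper bound I would first show that, at $\beta = \beta^\star$, the stochastic term $\sigma\min((\beta^\star)^{-1}h,1)^{d/2}$ is always $\lesssim \beta^{\star s_r}$. Indeed, when $\lambda \le s_r$ one has $\beta^\star \gtrsim h$, the minimum equals $(\beta^\star)^{-1}h$, and the balance defining $\beta^\star$ (namely $\sigma(\beta^\star)^{-d/2}h^{d/2} = \beta^{\star s_r}$) gives the stochastic term $\sim \beta^{\star s_r}$; when $\lambda > s_r$ the minimum equals $1$, so the stochastic term is $\le \sigma \lesssim \beta^{\star s_r}$, again because $\lambda > s_r$. Feeding this into Proposition~\ref{upper_est_error_reg} leaves $e_{\mathrm{reg}}(h) \lesssim \beta^{\star s_r} + h^{s_a}$. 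The dichotomy $h^{s_a} \lesssim \beta^{\star s_r} \iff \lambda \le \lambda_M$ then yields $e_{\mathrm{reg}}(h) \lesssim \beta^{\star s_r}$ when $\lambda \le \lambda_M$, and $e_{\mathrm{reg}}(h) \lesssim h^{s_a}$ when $\lambda > \lambda_M$, which is the claimed upper bound in both cases.

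For the matching lower bound in the case $\lambda \le \lambda_M$, I would discard the nonnegative stochastic contribution and keep only the bias term, so that $e_{\mathrm{reg}}(h)$ dominates the quantity controlled by Lemma~\ref{lower_easy}; applied with $\beta = \beta^\star$ this gives
\[e_{\mathrm{reg}}(h) \gtrsim \beta^{\star s_r/2}\bigl(\beta^{\star s_r/2} - h^{s_a/2}\bigr) = \beta^{\star s_r}\Bigl(1 - (h^{s_a}/\beta^{\star s_r})^{1/2}\Bigr).\]
Since $h^{s_a}/\beta^{\star s_r} = h^{\,s_a - \frac{(2\lambda+d)s_r}{2s_r+d}}$ carries a strictly positive exponent whenever $\lambda < \lambda_M$, the bracket tends to $1$ as $h \to 0$ and is bounded below by, say, $1/2$ for $h$ small, giving $e_{\mathrm{reg}}(h) \gtrsim \beta^{\star s_r}$. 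Combined with the upper bound, this proves $e_{\mathrm{reg}}(h) \sim \beta^{\star s_r}$ for $\lambda < \lambda_M$.

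The one genuinely delicate point, which I expect to be the main obstacle, is the endpoint $\lambda = \lambda_M$: there $h^{s_a} = \beta^{\star s_r}$ exactly, the discretisation and regularisation biases are of the same order, and the lower bound of Lemma~\ref{lower_easy} degenerates to the trivial $\gtrsim 0$. The reverse triangle inequality underlying that lemma is structurally unable to rule out a cancellation between the two biases, and since the paper only controls $\|K_\beta \ast \phi_i\|$ from below under the restrictive assumptions of Appendix~\ref{app_further}, the stochastic term cannot in general be invoked to save the lower bound at the threshold. I would therefore either restrict the sharp estimate $\sim$ to $\lambda < \lambda_M$ (keeping only $\lesssim h^{s_a}$ at and beyond the threshold, in agreement with the strict inequalities appearing in Theorem~\ref{interesting_regime}), or treat the single value $\lambda = \lambda_M$ by a finer, explicitly non-cancelling choice of worst-case $f$. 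Away from this endpoint the proof is a routine assembly of the quoted upper and lower bounds.
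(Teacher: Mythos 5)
Your proof is correct where you assert it and follows essentially the same route as the paper's: the upper bound is Proposition~\ref{upper_est_error_reg} evaluated at $\beta = \beta^\star(h)$ (your two-case discussion of the stochastic term is exactly the paper's identity $\sigma(h)\min(\beta^\star(h)^{-1}h,1)^{d/2} = \min(\sigma(h),\beta^\star(h)^{s_r})$), combined with the dichotomy $h^{s_a} \lesssim \beta^\star(h)^{s_r} \iff \lambda \leq \lambda_M$; the lower bound is Lemma~\ref{lower_easy} applied at $\beta = \beta^\star(h)$.

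The one divergence is the endpoint $\lambda = \lambda_M$, and there your caution is warranted: it points at an imprecision in the paper's own proof rather than a gap in yours. The paper concludes $e_{\mathrm{reg}}(h) \gtrsim \beta^\star(h)^{s_r/2}\big(\beta^\star(h)^{s_r/2} - h^{s_a/2}\big) \gtrsim \beta^\star(h)^{s_r}$ ``since $h^{s_a} \lesssim \beta^\star(h)^{s_r}$ when $\lambda \leq \lambda_M$'', but that last inference requires the ratio $h^{s_a}/\beta^\star(h)^{s_r}$ to stay bounded away from $1$, which holds (the ratio tends to $0$) precisely when $\lambda < \lambda_M$. At $\lambda = \lambda_M$ one has $h^{s_a} = \beta^\star(h)^{s_r}$ exactly with the paper's convention $\beta^\star(h) = h^{(2\lambda+d)/(2s_r+d)}$, so the middle quantity vanishes and the chain proves nothing. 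As you observe, the reverse-triangle structure of Lemma~\ref{lower_easy} cannot exclude cancellation between the two bias terms when they are of the same order, and the stochastic term cannot be invoked under the paper's hypotheses (for $s_a > s_r$ it is in any case too small at the threshold, since $\sigma(h) = h^{\lambda_M} = o(h^{s_a})$ there, even granting the lower bound of Appendix~\ref{app_further}). So under the stated assumptions the sharp estimate in the first case is established only for $\lambda < \lambda_M$, with only $e_{\mathrm{reg}}(h) \lesssim h^{s_a}$ available at $\lambda = \lambda_M$; this is exactly the restriction you propose, and it costs nothing downstream, since Theorem~\ref{interesting_regime} and Proposition~\ref{sa>sr} invoke the equivalence $\sim$ only for $\lambda$ strictly below $\lambda_M$ (Proposition~\ref{easy_regime}, which allows $\lambda = \lambda_M$ in its first case, would inherit the same caveat).
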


\begin{proof}
Back to the estimate of Proposition~\ref{upper_est_error_reg}, we have
\begin{align*} e_{\mathrm{reg}}(h)&  \lesssim  \sigma(h) \min(\beta^\star(h)^{-1} h,1)^{d/2} + h^{s_a}  + \beta^\star(h)^{s_r}  = \min(\sigma(h), \beta^\star(h)^{s_r}) + h^{s_a}  + \beta^\star(h)^{s_r} \\
& \lesssim h^{s_a}  + \beta^\star(h)^{s_r},
& 
\end{align*}
and we infer
\begin{align*} e_{\mathrm{reg}}(h) \lesssim \begin{cases}  h^{\frac{2\lambda+d}{2 s_r +d} s_r}  & \text{ if } \lambda \leq \lambda_M\\ \\
h^{s_a}  & \text{ if } \lambda > \lambda_M,
\end{cases}
\end{align*}
It remains to show 
$$e_{\mathrm{reg}}(h) \gtrsim h^{\frac{2\lambda+d}{2 s_r +d} s_r}$$ 
whenever $\lambda \leq \lambda_M$. This is a consequence of Lemma~\ref{lower_easy} which gives 
$$e_{\mathrm{reg}}(h) \gtrsim \beta^\star(h)^{s_r/2} (\beta^\star(h)^{s_r/2} - h^{s_a/2}) \gtrsim \beta^\star(h)^{s_r} = h^{\frac{2\lambda+d}{2 s_r +d} s_r}$$ 
since $h^{s_a} \lesssim \beta^\star(h)^{s_r}$ under the assumption $\lambda \leq \lambda_M$.
\end{proof}

\begin{remark}
We do not know whether the tighter estimate $e_{\mathrm{reg}}(h) \sim h^{s_a}$ is valid for $\lambda > \lambda_M$ under our set of hypotheses, or if additional realistic assumptions can be made to establish it. 
\end{remark}

We now highlight situations where it is {\it strictly} more advantageous \textit{not to regularise} via the rule~\eqref{reg_choice} and ignore the effect of the regularisation.

\subsection{The case when $s_a \leq s_r$}
Our first result is that such a scenario does not occur whenever $s_a \leq s_r$.
\begin{proposition}
\label{easy_regime}
If $s_a \leq s_r \leq s$, then for every $R>0$,
\[ \begin{cases} e_{\mathrm{reg}}(h) \sim h^{\frac{2\lambda+d}{2 s_r +d} s_r}  \text{ and } e_{\mathrm{noreg}}(h) \sim h^\lambda& \text{ if } \lambda \leq \lambda_M,    \\ \\
e_{\mathrm{reg}}(h) \lesssim h^{s_a}  \text{ and } e_{\mathrm{noreg}}(h) \sim h^\lambda & \text{ if } \lambda_M < \lambda < s_a, \\ \\
e_{\mathrm{reg}}(h) \lesssim h^{s_a}   \text{ and } e_{\mathrm{noreg}}(h) \sim h^{s_a} & \text{ if } \lambda \geq s_a.
\end{cases}\]
In particular, it is strictly better to regularise through~\eqref{reg_choice} whenever $\lambda < s_a$, in which case we have
$$e_{\mathrm{reg}}(h)= o(e_{\mathrm{noreg}}(h)).$$ 
It is better to regularise through~\eqref{reg_choice} whenever $\lambda \geq s_a$,  in which case we have
$$e_{\mathrm{reg}}(h)  \lesssim e_{\mathrm{noreg}}(h).$$
\end{proposition}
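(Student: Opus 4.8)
The plan is to observe that the two lines displayed for each regime are immediate consequences of the already-established estimates, so that the only genuine content is an elementary exponent comparison. First I would record from~\eqref{noreg_est} that $e_{\mathrm{noreg}}(h) \sim h^{\min(\lambda,s_a)}$, and from Proposition~\ref{est_reg} that $e_{\mathrm{reg}}(h) \sim h^{\frac{2\lambda+d}{2 s_r +d} s_r}$ for $\lambda \le \lambda_M$ and $e_{\mathrm{reg}}(h) \lesssim h^{s_a}$ for $\lambda > \lambda_M$. The hypothesis $s_a \le s_r$ enters only through the structural inequality $\lambda_M \le s_a$, read off directly from~\eqref{weird_threshold} since $\frac{s_a}{s_r}-1 \le 0$. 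This inequality guarantees that $\lambda \le \lambda_M$, $\lambda_M < \lambda < s_a$ and $\lambda \ge s_a$ really do form a partition of $[0,\infty)$ (with the middle interval possibly empty), and substituting the two recorded estimates into each piece produces the three-line table verbatim.

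For the strict comparison when $\lambda < s_a$, I would split along $\lambda_M$. On $\lambda \le \lambda_M$ both errors are sharp, and the difference of the two $h$-exponents is $\frac{2\lambda+d}{2 s_r +d} s_r - \lambda = \frac{d(s_r-\lambda)}{2 s_r +d}$, which is strictly positive because $\lambda < s_a \le s_r$; since $h \to 0$, a strictly larger exponent means $e_{\mathrm{reg}}(h) = o(e_{\mathrm{noreg}}(h))$. On $\lambda_M < \lambda < s_a$ one has $e_{\mathrm{reg}}(h) \lesssim h^{s_a}$ against $e_{\mathrm{noreg}}(h) \sim h^{\lambda}$, and $s_a > \lambda$ again gives $h^{s_a} = o(h^{\lambda})$, hence the same conclusion $e_{\mathrm{reg}}(h) = o(e_{\mathrm{noreg}}(h))$.

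When $\lambda \ge s_a$, I would use $\lambda \ge s_a \ge \lambda_M$ so that the regularised bound of Proposition~\ref{est_reg} applies in the form $e_{\mathrm{reg}}(h) \lesssim h^{s_a}$ (equivalently, $\beta^\star(h)^{s_r} \lesssim h^{s_a}$ holds precisely because $\lambda \ge \lambda_M$); since $e_{\mathrm{noreg}}(h) \sim h^{s_a}$ in this range, this yields $e_{\mathrm{reg}}(h) \lesssim e_{\mathrm{noreg}}(h)$, which completes the comparison.

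No step is analytically difficult once Lemma~\ref{equal_noreg} and Proposition~\ref{est_reg} are in hand; the only point requiring care is the threshold bookkeeping. In the degenerate case $s_a = s_r$ one has $\lambda_M = s_a$, so the middle interval is empty and the first line already reaches $\lambda = \lambda_M = s_a = s_r$, where the exponent $\frac{2\lambda+d}{2 s_r +d} s_r$ equals $s_a$; I would check that this boundary value is consistent with the third line (both errors of order $h^{s_a}$), so the three cases glue together without gaps or overlaps. This threshold consistency is the main, if modest, obstacle.
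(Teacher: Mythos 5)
Your proposal is correct and follows exactly the paper's route: the paper's proof is precisely the observation that $\lambda_M \leq s_a \leq s_r$ under the hypothesis $s_a \leq s_r$, after which all three cases follow by combining Proposition~\ref{est_reg} with the estimate~\eqref{noreg_est}. Your explicit exponent computation $\frac{2\lambda+d}{2s_r+d}s_r - \lambda = \frac{d(s_r-\lambda)}{2s_r+d} > 0$ for $\lambda < s_r$, and your treatment of the degenerate boundary $\lambda = \lambda_M = s_a = s_r$, are simply the bookkeeping the paper leaves implicit.
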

\begin{proof}
Recall that $\lambda_M\leq s_a \leq s_r$. All cases are obtained by combining Proposition~\ref{est_reg} with the estimate~\eqref{noreg_est}.
\end{proof}

\subsection{The case when $s_a > s_r$}

In that case, it is indeed possible to find situations where it becomes {\it strictly} more advantageous \textit{not to regularise} via the rule~\eqref{reg_choice} and ignore the effect of the regularisation, a perhaps surprising result.

\begin{proposition}
\label{sa>sr}
If $s \geq s_a > s_r$, then for every $R>0$,
\[ \begin{cases} e_{\mathrm{reg}}(h) \sim h^{\frac{2\lambda+d}{2 s_r +d} s_r}  \text{ and } e_{\mathrm{noreg}}(h) \sim h^\lambda& \text{ if } \lambda \leq s_a,   \\  \\
e_{\mathrm{reg}}(h)  \sim h^{\frac{2\lambda+d}{2 s_r +d}s_r} \text{ and } e_{\mathrm{noreg}}(h) \sim h^{s_a} & \text{ if } s_a < \lambda < \lambda_M, \\ \\
e_{\mathrm{reg}}(h) \lesssim h^{s_a}   \text{ and } e_{\mathrm{noreg}}(h) \sim h^{s_a} & \text{ if } \lambda \geq  \lambda_M.
\end{cases}\]
In particular, it is strictly better to regularise  through~\eqref{reg_choice}  whenever $\lambda < s_r$, in which case we have
$$e_{\mathrm{reg}}(h)= o(e_{\mathrm{noreg}}(h)).$$
It is strictly better not to regularise  through~\eqref{reg_choice}  whenever $s_r < \lambda  <\lambda_M$, in which case we have
$$e_{\mathrm{noreg}}(h)= o(e_{\mathrm{reg}}(h)).$$
Finally, it is better to regularise  through~\eqref{reg_choice} whenever $\lambda \geq \lambda_M$, in which case we have
 $$e_{\mathrm{reg}}(h)  \lesssim e_{\mathrm{noreg}}(h).$$
\end{proposition}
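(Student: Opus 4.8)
The plan is to derive the three-case display purely by assembling Proposition~\ref{est_reg} and the no-regularisation estimate~\eqref{noreg_est}, and then to obtain the strict comparisons from the two elementary exponent inequalities already recorded just before Proposition~\ref{est_reg}. The starting observation is that the hypothesis $s_a > s_r$ forces $s_r < s_a < \lambda_M$, so the threshold $\lambda_M$ lies strictly to the right of $s_a$; this is precisely what creates the genuinely new middle regime $s_a < \lambda < \lambda_M$ that was absent from Proposition~\ref{easy_regime}.

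First I would establish the display. Whenever $\lambda < \lambda_M$, which covers both $\lambda \leq s_a$ and $s_a < \lambda < \lambda_M$ since $s_a < \lambda_M$, Proposition~\ref{est_reg} gives $e_{\mathrm{reg}}(h) \sim h^{\frac{2\lambda+d}{2s_r+d}s_r}$, while for $\lambda \geq \lambda_M$ it gives $e_{\mathrm{reg}}(h) \lesssim h^{s_a}$ (at $\lambda = \lambda_M$ the exponent $\frac{2\lambda_M+d}{2s_r+d}s_r$ equals $s_a$, so the two descriptions agree). Simultaneously, \eqref{noreg_est} reads $e_{\mathrm{noreg}}(h) \sim h^{\min(\lambda,s_a)}$, which is $h^\lambda$ for $\lambda \leq s_a$ and $h^{s_a}$ for $\lambda > s_a$. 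Matching these against the three ranges of $\lambda$ yields the display.

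For the comparisons I would invoke the two facts recorded earlier, namely $\frac{2\lambda+d}{2s_r+d}s_r \geq \lambda \iff \lambda \leq s_r$ and $\frac{2\lambda+d}{2s_r+d}s_r \leq s_a \iff \lambda \leq \lambda_M$, each strict away from the respective threshold. When $\lambda < s_r$ the first inequality is strict, so the exponent of $e_{\mathrm{reg}}(h)$ strictly exceeds that of $e_{\mathrm{noreg}}(h) \sim h^\lambda$, giving $e_{\mathrm{reg}}(h) = o(e_{\mathrm{noreg}}(h))$. When $s_r < \lambda < \lambda_M$ I would split at $s_a$: for $s_r < \lambda \leq s_a$ the first inequality (now reversed and strict) shows the exponent of $e_{\mathrm{reg}}(h)$ is strictly below $\lambda$, while for $s_a < \lambda < \lambda_M$ the second inequality (strict) shows it is strictly below $s_a$; in both subcases $e_{\mathrm{noreg}}(h) = o(e_{\mathrm{reg}}(h))$. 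Finally, for $\lambda \geq \lambda_M$ one combines $e_{\mathrm{reg}}(h) \lesssim h^{s_a}$ with $e_{\mathrm{noreg}}(h) \sim h^{s_a}$ to conclude $e_{\mathrm{reg}}(h) \lesssim e_{\mathrm{noreg}}(h)$.

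There is no serious obstacle here: the argument is essentially bookkeeping over the regions delimited by $s_r$, $s_a$ and $\lambda_M$. The only points requiring care are confirming the strictness of the exponent inequalities off the thresholds, so that $\lesssim$ upgrades to $o(\cdot)$, and correctly handling the boundary value $\lambda = \lambda_M$ where the two descriptions of $e_{\mathrm{reg}}(h)$ coincide. I do not expect to need any estimate beyond Proposition~\ref{est_reg} and \eqref{noreg_est}; in particular, the partial lower bounds on $\|K_\beta \ast \phi_i\|$ mentioned in the preceding remark are not needed, since the favourable comparison in each regime is driven entirely by the bias terms $h^{s_a}$ and $\beta^{s_r}$ rather than by the variance term.
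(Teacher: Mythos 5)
Your proposal is correct and follows exactly the paper's own argument: the paper's proof is simply ``Recall that $s_r < s_a < \lambda_M$; all cases are obtained by combining Proposition~\ref{est_reg} with the estimate~\eqref{noreg_est}.'' Your write-up merely makes explicit the exponent comparisons (including the strictness off the thresholds and the boundary case $\lambda = \lambda_M$) that the paper leaves to the reader.
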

\begin{proof}
Recall that $s_r < s_a < \lambda_M$.  Again, all cases are obtained by combining Proposition~\ref{est_reg} with the estimate~\eqref{noreg_est}.
\end{proof}




As mentioned earlier, we can go further and estimate the level of noise with  highest gain {\it in not regularising} compared to {\it regularising} in the regime  $s_a > s_r$. We find it more transparent to express this gain in terms of sampling size $n$ rather than in terms of the mesh size $h$, since $n$ may be regarded as the actual cost of measuring $f$ over the design~$x_i$, $i =1, \ldots, n$. Recall that $\sigma(h) = \sigma(n) \sim n^{-\frac{\lambda}{d}}$ with $\lambda \geq 0$. 
The highest gain happens when $\lambda=s_a$ for which \[e_{\mathrm{reg}}(n) \sim h^{\frac{2 s_a+d}{2 s_r +d} s_r} \sim n^{-\frac{2 s_a+d}{2 s_r +d} \frac{s_r}{d}}, \quad e_{\mathrm{noreg}}(n) \sim h^{s_a} \sim n^{-\frac{s_a}{d}}.\] One can hence gain up to a polynomial (in $n$) factor of order $\textstyle \frac{s_a - s_r}{2 s_r +d}$ which vanishes for large~$d$. This is consistent with the condition $s > d/2$ which somehow enforces $f$ to be smoother as $d$ increases. 





 
 \section{Numerical simulations}  
 \label{sec_num}

 \subsection{Setting}
We work in dimension $d = 1$ with $\Omega = (0,1)$. We are mostly interested in situations where regularisation might be detrimental, \textit{i.e}, when $s_r < s_a$. Hence, we choose kernels of order $s_r = 1$ and $s_r = 2$ respectively, and approximation methods of order $s_a = 2$ and $s_a = 3$. These are defined below. 
\subsubsection*{Regularisation}
We consider two kernels $K$ and $H$, given by 
$$K := \mathds{1}_{[0,1]}, \qquad \text{and}\;H :=\textstyle \frac{1}{2} \mathds{1}_{[-1,1]}.$$
They satisfy $s_r = 1$ and  $s_r = 2$ respectively. The kernel $K$ is not standard: it is not centred, hence its low order of convergence. We make this rather artificial choice in order to better illustrate our results which are most visible when the gap $s_a - s_r$ gets larger, especially in small dimensions.

\subsubsection*{Approximation}
We consider $\mathbb{P}_1$ and $\mathbb{P}_2$ finite elements. We make sure to be consistent with our choice that~$n$ represents the number of basis functions. In doing so, the definitions below slightly differ from usual definitions which have $h$ rather than $n$ as the defining parameter. \\

\noindent \textit{$\mathbb{P}_1$ finite elements.} 
We let $n \geq 3$ be given. We define $h := \frac{1}{n-1}$, and for $i = 1, \ldots, {n}$, we denote $x_i := (i-1)h$.
Defining the shape function \[\forall x \in [0,1], \qquad \varphi(x) := (1-|x|) \mathds{1}_{[-1,1]}(x),\] the basis functions are then given as follows, for $i = 1, \ldots, n$:
\[\forall x \in [0,1], \qquad \phi_i(x) = \varphi\Big(\frac{x-x_i}{h}\Big). 
\]
These basis functions clearly satisfy~\eqref{sup_base} and~\eqref{pointwise}, so that~\eqref{equal_basis} holds. Furthermore, the approximation operator $P_n E_n$ associated to $\mathbb{P}_1$ finite elements satisfies $s_a = 2$.\\ 

\noindent \textit{$\mathbb{P}_2$ finite elements.} 
We let $n \geq 3$ be an odd interger. We define $h := \frac{2}{n-1}$, and for $i = 1, \ldots, {n},$, we set $\textstyle x_i := (i-1)\frac{h}{2}$.
Defining the shape functions \[\forall x \in [0,1], \qquad \varphi(x) := (1-|x|) (1-2 |x|) \mathds{1}_{[-1,1]}(x),\quad \psi(x) := (1-4x^2) \mathds{1}_{[-\frac{1}{2},\frac{1}{2}]}(x),\] the basis functions are then given as follows, for $i =1, \ldots, n$:
\[\forall x \in [0,1], \qquad \phi_i(x) =\begin{cases}  \varphi\big(\frac{x-x_i}{h}\big) & \text{ if $i$ is odd,}   \\ \\
 \psi\big(\frac{x-x_i}{h}\big) & \text{ if $i$ is even.}  
 \end{cases}
\]
These basis functions clearly satisfy~\eqref{sup_base} and~\eqref{pointwise}, so that~\eqref{equal_basis} holds.
The approximation operator $P_n E_n$ associated to $\mathbb{P}_2$ finite elements satisfies $s_a = 3$.

 \subsection{Methodology}
 In order to illustrate our theoretical results, we aim at computing, for a given function $f \in H^s_0(\Omega)$ with $s > d/2 =1/2$, the two errors for various noise levels $\sigma = \sigma(h) = h^\lambda$, which corresponds to $\sigma =\sigma(n) = n^{-\lambda/d}$. 
More precisely, we are interested in finding how quickly the two errors
\begin{equation*} 
 \E{ \norm{ P_n y_\sigma- f}^2}^{1/2} \quad \text{and} \quad 
 \label{error_reg}\E{ \norm{ K_{\beta^\star(h)} P_n y_\sigma- f}^2}^{1/2}\end{equation*}  vanish as $n$  grows, as a function of the noise level defined by the parameter~$\lambda$. Recall that the regularisation is made with a parameter $\beta$ chosen to be $\beta^\star(h)$ given by~\eqref{reg_choice}. Mathematically, for a given choice of $\lambda$, a given error is of  order $n^{-\gamma(\lambda)}$ and our goal is to estimate the function $\gamma$ over a given interval for $\lambda$. Hence, for a given choice of approximation method, we have a function $\lambda \mapsto \gamma_{\text{noreg}}(\lambda)$ defined by
 \begin{equation} 
 \label{error_noreg}
 \E{ \norm{ P_n y_\sigma- f}^2}^{1/2} \sim n^{-\gamma_{\text{noreg}}(\lambda)},
 \end{equation}
and for a given choice of kernel and approximation method, we have a function $\lambda \mapsto \gamma_{\text{reg}}(\lambda)$ defined by
 \begin{equation} 
 \label{error_reg}\E{ \norm{ K_{\beta^\star(h)} P_n y_\sigma- f}^2}^{1/2} \sim n^{-\gamma_{\text{reg}}(\lambda)} .\end{equation} 
From our theoretical results, recalling that $d = 1$, we expect
\[\forall \lambda \geq 0, \quad \gamma_{\text{noreg}}(\lambda) = \min(\lambda, s_a), \quad \text{ and }\quad  \forall \lambda \in [0,\lambda_M], \quad \gamma_{\text{reg}}(\lambda) = \frac{2\lambda+1}{2 s_r +1} s_r.\]
For $\lambda > \lambda_M$, we recall our upper bound for the error which translates into the lower bound $\gamma_{\text{reg}}(\lambda) \geq s_a$. When plotting functions $\gamma_{\text{noreg}}$ and $\gamma_{\text{reg}}$, we pay specific attention to the regime \[\lambda \in [0,\lambda_M] = \left[0, \, s_a + \frac{1}{2}\left(\frac{s_a}{s_r} - 1\right)\right],\]
but we will consider the larger interval $[0,5]$; the latter contains $[0, \lambda_M]$ in all cases.

When estimating the error without regularisation, we consider both  $\mathbb{P}_1$ and $\mathbb{P}_2$ finite elements.\\

 When estimating the error with regularisation, we consider all $4$ possible scenarios, corresponding to choosing the kernel to be either $K$ or $H$, and the approximation method to be either $\mathbb{P}_1$ or $\mathbb{P}_2$ finite elements. Note that only in the case where the kernel is $H$ and with $\mathbb{P}_1$ finite elements does one have $s_r = s_a$; in all other cases $s_r < s_a$.

\subsubsection*{Estimating orders of convergence} For a fixed choice of $\lambda \geq 0$, we must evaluate how quickly a given error tends to $0$ as a functions of $n$. In order to do so, we choose $n = 10, 10^2, 10^3$ (which corresponds to $h\sim 10^{-1}, 10^{-2}, 10^{-3}$ respectively), and compute the slope of both errors in $\log$-$\log$ scale.

\subsubsection*{Estimating expectations} 
For a fixed choice of $n$, this means we have to compute the above errors; we evaluate the expectations by means of $10^3$ draws for the random variables~$\xi_i$ (chosen to be normally distributed). 

\subsubsection*{Estimating norms} For a given draw, $L^2$-norms $\|\cdot\|$ are estimated by Simpson's rule with $10^5$ points, in order to ensure accurate estimates that do not compete with the expected orders of convergence. 

\subsubsection*{Estimating convolutions} Let us stress that other integrals are involved in the process of computing the error, when regularisation is involved. Those are inherent in evaluating the convolution $K_\beta \ast P_n y_\sigma$, which in turn boils down to evaluating all functions $K_\beta \ast \phi_i$, $i = 1, \ldots, n$. In order for these computations to not impact the orders of convergence, we analytically rather than numerically compute these functions. This is possible for our choices of kernels and finite element functions. 

In practice, however, these integrals would be computed with errors. All other things being equal, these errors can only further reduce the quality of regularising compared to not regularising.

 \subsection{Numerical results}
 For all numerical experiments, we choose 
 \begin{equation}
 \label{choice_function}
f : x \mapsto (1-x)^2 \sin^2(4x)
 \end{equation}
 which satisfies $f \in H_0^s(\Omega)$ with $s > 3$, so that we will always have $s \geq s_r$ as well as  $s \geq s_a$. 
 
 \subsubsection*{$\mathbb{P}_1$ and  $\mathbb{P}_2$ finite elements}
The orders of convergence obtained numerically match the theoretical ones, as shown by Figure~\ref{noreg}. One indeed expects the function $\gamma_{\text{noreg}}(\lambda) : \lambda \mapsto \min(\lambda, s_a)$ and this is exactly what is found.

\begin{figure}
     \centering
     \begin{subfigure}
         \centering
         \includegraphics[width=0.47\textwidth]{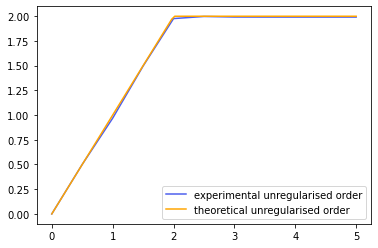}
     \end{subfigure}
     \begin{subfigure}
         \centering
         \includegraphics[width=0.47\textwidth]{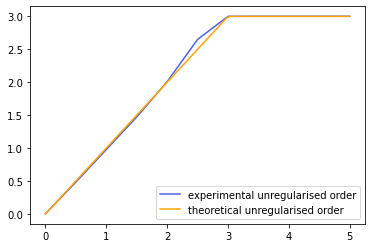}
     \end{subfigure}
        \caption{{\small{\it  \textbf{No regularisation}: plot of $\lambda \mapsto \gamma_{\text{noreg}}(\lambda)$ defined by~\eqref{error_noreg}. The left panel shows the case of $\mathbb{P}_1$ finite elements, the panel right that of $\mathbb{P}_2$ finite elements. In both cases, the theoretical curve $\lambda \mapsto \min(\lambda, s_a)$ is plotted in orange against the numerically obtained curve, in magenta.}}}
        \label{noreg}
\end{figure}

\subsubsection*{The kernel $K$ with $\mathbb{P}_1$ and $\mathbb{P}_2$ finite elements}
In this case, $s_r = 1$, with either $s_a  = 2$ or $s_a = 3$. In the case of $\mathbb{P}_1$ finite elements, one has $\lambda_M = 2.5$, while in the second $\lambda_M =4$. The orders of convergence obtained numerically are a good match to the theoretical ones, as shown by Figure~\ref{reg1}. The match is almost perfect in the $\mathbb{P}_1$ case. In the second case of $\mathbb{P}_2$ finite elements, discrepancies may be observed as $\lambda$ approaches $\lambda_M = 4$.
\begin{figure}
     \centering
     \begin{subfigure}
         \centering
         \includegraphics[width=0.47\textwidth]{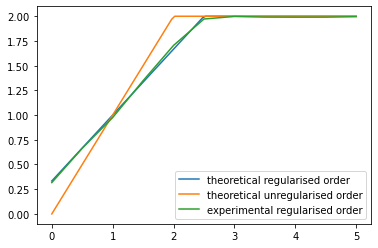}
     \end{subfigure}
     \begin{subfigure}
         \centering
         \includegraphics[width=0.47\textwidth]{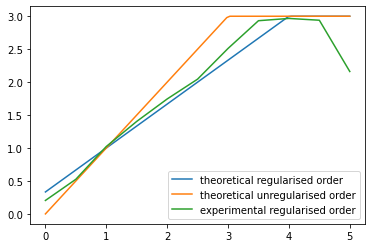}
     \end{subfigure}
        \caption{{\small {\it \textbf{Regularisation with kernel $K$}: plot of $\lambda \mapsto \gamma_{\text{noreg}}(\lambda)$ defined by~\eqref{error_noreg}, with regularisation through the kernel $K$. The left panel shows the case of $\mathbb{P}_1$ finite elements ($\lambda_M = 2.5$), the panel right that of $\mathbb{P}_2$ finite elements ($\lambda_M = 4$). In both cases, the theoretical curves without regularisation $\lambda \mapsto \min(\lambda, s_a)$ and with regularisation $\lambda \mapsto \frac{1}{3}(2\lambda +1)$ are plotted in orange and blue, respectively. The numerically obtained curve (with regularisation through $K$) is plotted in green.}}}        \label{reg1}
\end{figure}

\subsubsection*{The kernel $H$ with $\mathbb{P}_1$ and $\mathbb{P}_2$ finite elements}
In this case, $s_r = 2$, with either $s_a  = 2$ or $s_a = 3$. In the first case, one has $\lambda_M = s_r = s_a = 2$, while in the second $\lambda_M = 3.25$. The orders of convergence obtained numerically are a good match to the theoretical ones, as shown by Figure~\ref{reg2}. In both cases, there is little difference between the two theoretical curves, making it more difficult to clearly distinguish the numerically-built curve from the two theoretical ones.\\
\begin{figure}
\label{reg2}
     \centering
     \begin{subfigure}
         \centering
         \includegraphics[width=0.47\textwidth]{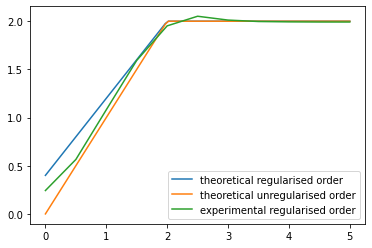}
     \end{subfigure}
     \begin{subfigure}
         \centering
         \includegraphics[width=0.47\textwidth]{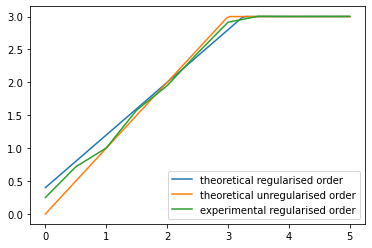}
     \end{subfigure}
        \caption{\textbf{Regularisation with kernel $H$}: plot of $\lambda \mapsto \gamma_{\text{reg}}(\lambda)$ defined by~\eqref{error_noreg}, with regularisation through the kernel $H$. The left panel shows the case of $\mathbb{P}_1$ finite elements ($\lambda_M = 2$), the panel right that of $\mathbb{P}_2$ finite elements ($\lambda_M = 3.25$). In both cases, the theoretical curves without regularisation $\lambda \mapsto \min(\lambda, s_a)$ and with regularisation $\lambda \mapsto \frac{2}{5} (2\lambda+1)$ are plotted in orange and blue, respectively. The numerically obtained curve (with regularisation through $H$) is plotted in green.}        \label{reg2}
\end{figure}

In this 1-dimensional setting, the actual improvement obtained by {\it not regularising} whenever this is superior to the regularisation by~\eqref{common_reg} is hardly visible at the level of reconstructions. This is why we do not provide examples of such reconstructions.

%
%
%
%
%

\bibliographystyle{alpha}
\bibliography{Biblio_Shakespeare.bib}
\appendix
\section{A proof of the estimate~\eqref{equal_reg}}
\label{app_fractional}
We here prove the estimate~\eqref{equal_reg}. We let $s \geq s_r$.

\subsubsection*{Lower bound}
We start with the easiest part, namely the lower bound, which comes from the assumption that $K$ does not reproduce one moment of order $s_r$, which we denote $P(x) = x_1^{r_1}\ldots x_d^{r_d}$ with $r_1 + \ldots + r_d = s_r$. Without loss of generality, we may assume that $0 \in \Omega$ and we consider the function $f = \chi P$ where $\chi \in C_c^\infty(\Omega)$ equals $1$ in a neighbourhood of $0$. Hence we have $f \in H^s_0(\Omega)$.

 For $\beta$ small enough and a sufficiently small neighbourhood of $0$ (which we denote by~$\Omega_0$) we may write $K_\beta\ast f(x) - f(x) = K_\beta \ast P(x) -P(x)$ for all $x \in \Omega_0$, where we use that $K$ has compact support.
 For $x \in \Omega_0$ and $\beta$ small enough so that $\beta^{-1} (x-\Omega) \subset \supp{K}$ for all $x \in \Omega_0$, we have
\begin{align*} 
K_\beta \ast P(x) -P(x) & = \int_{\beta^{-1}(x-\Omega)} K(u) P(x-\beta u) \,du - P(x) \\
& = \int_{\supp{K}} K(u) P(x-\beta u) \,du - P(x). 
\end{align*}
When expanding the product $P(x-\beta u) = (x_1-\beta u_1)^{r_1} \ldots (x_d- \beta u_d)^{r_d}$ and integrating against $K$, all terms but two vanish since $K$ reproduces moments up to order $s_r-1$, and we are left with 
\begin{align*} 
K_\beta \ast P(x) -P(x) & = \int_{\supp{K}} K(u) (P(x)  +(-1)^{s_d} \beta^{s_r} P(u)) \,du - P(x) \\
& = P(x) \left( \int_{\supp{K}} K(u)  \,du -1\right) +(-1)^{s_d} \beta^{s_r} \int_{\supp{K}} K(u) P(u) \,du \\
& = (-1)^{s_d}  \beta^{s_r} \int_{\supp{K}} K(u) P(u) \,du,
\end{align*}
where the last constant appearing is non-zero by assumption. As a result, we may write 
\[\|K_\beta \ast f - f\|_{L^2(\Omega)} \geq \|K_\beta \ast f - f\|_{L^2(\Omega_0)} \gtrsim \beta^{s_r}.\]
Uo to changing $f$ to $\tfrac{R}{\|f\|_{s_r}} f$, we thus have found some $f \in \mathcal{F}(s,R)$ such that $\|K_\beta \ast f - f\|\gtrsim \beta^{s_r}$, and it follows that 
\[\sup_{f \in \mathcal{F}(s,R)}  \norm{K_\beta \ast  f - f} \gtrsim   \beta^{s_r}.\]

\subsubsection*{Upper bound}

Now let $f \in H^s_0(\Omega)$.
We start with the case of $\Omega = \R^d$.
For $x \in \R^d$, we have
\[ K_\beta \ast f(x) - f(x)  = \int_{\R^d} \left( f(x- \beta y)-f(x)\right) \beta^{-d}K(\beta^{-1}y) \,dy = \int_{\R^d} \left(f(x- \beta y)-f(x)\right)K(y)\,dy.\]
Since $K$ has $\floor{s}$ vanishing moments, we may replace $f(x)$ by the Taylor polynomial of $f$ of order $\floor{s}$ at the point $x$, evaluated at $-\beta y$, which we denote $P_{\floor{s}}(x, -\beta y)$.
Hence we find\[f(x) - K_\beta \ast f(x)  = \int_{\R^d} \left(f(x- \beta y) - P_{\floor{s}}(x, -\beta y) \right)K(y)\,dy.\]
Next, we apply Minkowski's integral inequality (for the Lebesgue measure  $dx$ and the measure $|K(y)| \,dy$) to obtain
\begin{align*} 
\norm{f- K_\beta \ast f}_{L^2(\R^d)}^2 & = \int_{\R^d} \left|\int_{\R^d}  \left( f(x- \beta y) - P_{\floor{s}}(x, -\beta y)\right)K(y)\,dy\right|^2 dx \\
& \leq \int_{\R^d} \left(\int_{\R^d}  \left| f(x- \beta y)- P_{\floor{s}}(x, -\beta y) \right| |K(y)|\,dy\right)^2 dx \\
& \leq \left(\int_{\R^d}\left( \int_{\R^d}  \left|f(x- \beta y) - P_{\floor{s}}(x, -\beta y) \right|^p\,dx\right)^{1/2} |K(y)| \, dy\right)^2 \\
& = \left(\int_{\R^d} \norm{ f(\cdot - \beta y)- P_{\floor{s}}(\cdot, -\beta y)}_{L^2(\R^d)} |K(y)| \, dy \right)^2.
\end{align*}
We may now use the estimate for the remainder term in the Taylor expansion~\cite{FractionalSobolev2020}, which for a function in $f \in H^s(\R^d)$ reads
\[\forall z \in \R^d, \qquad \norm{ f(\cdot -z) -P_{\floor{s}}(\cdot, z)}_{L^2(\R^d)} \lesssim  |z|^s \|f\|_{H^s(\R^d)}.\]
We end up with 
\begin{align*} \norm{f- K_\beta \ast f}_{L^2(\R^d)} & \lesssim \beta^{s} \left(\int_{\R^d} |y|^s |K(y)| \, dy \right) \|f\|_{H^s(\R^d)}^2 \lesssim \beta^{s} \|f\|_{H^s(\R^d)}  \lesssim \beta^{s_r} \|f\|_{H^s(\R^d)},
\end{align*}
where we used $s \geq s_r$.
The result is proved for $\Omega = \R^d$. It remains to consider the case where $\Omega$ is a smooth domain.
For $f \in H^s_0(\Omega)$, its extension $\tilde f$ by $0$ to the whole of $\R^d$ satisfies $\tilde f \in H^s(\R^d)$, in which case one may use the above estimate
\[ \norm{K_\beta \ast  \tilde f - \tilde f}_{L^2(\R^d)} \lesssim  \|\tilde f\|_{H^{s_r}(\R^d)}  \beta^{s_r} =\|f\|_{s_r}  \beta^{s_r}.\]
This in turn leads to a bound for the error between $K_\beta \ast f$ and $f$ in~$L^2(\Omega)$,
\begin{equation*}
 \norm{K_\beta \ast  f - f} \leq\norm{K_\beta \ast  \tilde f - \tilde f}_{L^2(\R^d)} \lesssim  \|f\|_{s_r}  \beta^{s_r},
\end{equation*}
and concludes the proof.
\qed

\section{Further estimates}
\label{app_further}
\begin{lemma}
\label{basic_interaction}
Assume that~\eqref{sup_base} holds, and that $K \geq 0$. Then if $\beta = \beta(h) = o(h)$, there holds $\|K_\beta \ast \phi_i\| \gtrsim h^{d/2}$.
\end{lemma}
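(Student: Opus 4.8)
The plan is to exploit the nonnegativity of $K$ together with the scale separation $\beta = o(h)$: since the mollifier $K_\beta$ is concentrated at scale $\beta$, much smaller than the scale $h$ on which $\phi_i$ is bounded below, convolving cannot destroy the large positive values of $\phi_i$ near $x_i$. First I would fix notation. Because $K$ is compactly supported, $K_\beta$ is supported in a ball $B(0, C'\beta)$ for some constant $C'$ independent of $i$ and $n$, and $K_\beta \geq 0$ with $\int_{\R^d} K_\beta = 1$. The first line of~\eqref{sup_base} gives $\phi_i \geq m$ on $B(x_i, ch)\cap\overline\Omega$.

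Next comes the key pointwise lower bound. For $x$ with $|x - x_i| \leq \tfrac{c}{2} h$, the set over which $y \mapsto K_\beta(x-y)$ is supported, namely $B(x, C'\beta)$, satisfies $B(x, C'\beta) \subset B(x_i, ch)$ as soon as $C'\beta \leq \tfrac{c}{2}h$, which holds for $h$ small since $\beta = o(h)$. On this set $\phi_i \geq m$, and $K_\beta \geq 0$, so
$$
K_\beta \ast \phi_i(x) = \int_\Omega K_\beta(x-y)\,\phi_i(y)\,dy \geq m \int_{B(x, C'\beta)\cap\Omega} K_\beta(x-y)\,dy.
$$
If in addition $B(x, C'\beta) \subset \Omega$, the last integral equals $\int_{\R^d} K_\beta = 1$, whence $K_\beta \ast \phi_i(x) \geq m$.

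It then remains to produce a set of such admissible $x$ of Lebesgue measure $\gtrsim h^d$, uniformly in $i$. For interior nodes this is immediate, as $B(x_i, \tfrac c2 h)$ itself works. The only genuine difficulty is the uniform treatment of nodes $x_i$ close to $\partial\Omega$, where one must simultaneously keep $x$ inside $B(x_i, \tfrac c2 h)$ and keep the whole mollifier support $B(x, C'\beta)$ inside $\Omega$, so that the full mass $1$ is captured. Here I would invoke the interior cone (corkscrew) condition enjoyed by the smooth bounded domain $\Omega$: there is $\kappa \in (0,1)$, depending only on $\Omega$, and for each $x_i$ a point $z_i$ with $B(z_i, \kappa \tfrac c2 h) \subset B(x_i, \tfrac c2 h)\cap\Omega$. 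Shrinking to $A_i := B(z_i, \tfrac{\kappa}{4} c\, h)$, every $x \in A_i$ has $d(x, \partial\Omega) \geq \tfrac{\kappa}{4} c\, h$, so $B(x, C'\beta)\subset\Omega$ once $C'\beta \leq \tfrac{\kappa}{4} c\, h$, again guaranteed by $\beta = o(h)$; and $A_i \subset B(x_i, \tfrac c2 h)$ ensures the containment in $B(x_i, ch)$ used above. Thus $K_\beta \ast \phi_i \geq m$ on $A_i$, with $|A_i| \sim h^d$.

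Finally I would integrate over $A_i$ and take square roots:
$$
\norm{K_\beta \ast \phi_i}^2 \geq \int_{A_i} |K_\beta \ast \phi_i(x)|^2\,dx \geq m^2\, |A_i| \gtrsim h^d,
$$
so that $\norm{K_\beta \ast \phi_i} \gtrsim h^{d/2}$. The main obstacle is purely the boundary bookkeeping, namely guaranteeing both full mass capture and a region of measure $\sim h^d$ uniformly over nodes near $\partial\Omega$; the scale separation $\beta = o(h)$ is exactly what makes the $o(h)$-thick boundary layer negligible against the $\sim h$-ball on which $\phi_i$ is controlled, which is why this hypothesis is indispensable and why the estimate is only partial (it fails for $\beta \sim h$).
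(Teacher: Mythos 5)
Your proof is correct, and its skeleton is the same as the paper's: use $\beta=o(h)$ to ensure the support of $K_\beta(x-\cdot)$ sits inside the region $B(x_i,ch)$ where \eqref{sup_base} gives $\phi_i\geq m$, use $K\geq 0$ together with $\int K_\beta=1$ to get the pointwise bound $K_\beta\ast\phi_i(x)\geq m$ on a set of measure $\sim h^d$, and integrate. Where you genuinely diverge is the boundary bookkeeping, and there your version is tighter than the paper's. The paper lets $x$ range over the whole relative ball $B(x_i,rh)$ (with $r<c$) and asserts that $\beta^{-1}(x-\Omega)$ contains a ball of radius $\e\beta^{-1}h$, i.e.\ that $B(x,\e h)\subset\Omega$; this tacitly assumes $x$ stays at distance $\gtrsim h$ from $\partial\Omega$, which can fail when $x_i$ is at or near the boundary (allowed here, since the nodes lie in $\overline{\Omega}$, e.g.\ the endpoint nodes of the $\mathbb{P}_1$ elements in Section~\ref{sec_num}). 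Your interior-ball step---replacing $B(x_i,\tfrac{c}{2}h)\cap\Omega$ by a sub-ball $A_i$ of radius $\sim h$ all of whose points are at distance $\gtrsim h$ from $\partial\Omega$---is exactly what makes the full-mass-capture argument uniform over all nodes, at no cost since $|A_i|\sim h^d$; it closes a gap the paper's own writeup leaves open near $\partial\Omega$. One small caveat: your closing parenthetical ``it fails for $\beta\sim h$'' overstates the situation; what breaks down at $\beta\sim h$ is this proof mechanism (the mollifier support no longer fits inside $\{\phi_i\geq m\}\cap\Omega$), not necessarily the lower bound $\norm{K_\beta\ast\phi_i}\gtrsim h^{d/2}$ itself, which the paper neither proves nor disproves in that regime.
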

\begin{proof}
Let $0< r < c$ be fixed with $c$ given by~\eqref{sup_base}. We also pick $M>0$ such that  $K(z) = 0$ for $|z| > M$.
Let us evaluate $K_\beta \ast \phi_i(x)$ for $x \in B(x_i, r h) \cap \Omega$. 
For any $y \in \Omega$,  we shall prove that $K_\beta(x-y) > 0 \implies y \in B(x_i, ch)$. 
Indeed, the first condition imposes $|y-x| \leq M \beta$, hence $|y-x_i| \leq |y-x| + |x-x_i| \leq  M \beta + r h \leq c h$ for $h$ small enough since  $\beta = o(h)$. 
Owing to $K\geq 0$, this allows us to write for $x \in B(x_i, r h)$
\[K_\beta \ast \phi_i(x) = \int_\Omega K_\beta (x-y) \phi_i(y) \, dy \geq m \int_\Omega K_\beta(x-y)\,dy = m \int_{\beta^{-1}(x- \Omega)}K(z)\,dz.\]
For a given $x \in B(x_i, rh)$, $\beta^{-1} (x-\Omega)$ contains a ball of the form $\{z \in \R^d,\, |z| \leq \e \beta^{-1} h\}$ for some $\e$ small enough, and since $1 = o(\beta^{-1} h)$, the latter ball contains the support of $K$ for $h$ small enough, leading to
\[K_\beta \ast \phi_i(x)=  m \int_{\beta^{-1}(x- \Omega)}K(z)\,dz \gtrsim m \int_{\R^d} K(z)\,dz \gtrsim 1.\]
where $\gtrsim$ is uniform with respect to $x \in B(x_i, rh)$.
We conclude that
\[\|K_\beta \ast \phi_i\|^2 \geq \int_{B(x_i, rh)} |K_\beta \ast \phi_i(x)|^2 \,dx \gtrsim |B(x_i, rh)|  \sim h^d.\]
\end{proof}

\begin{lemma}
Under the assumptions of~Lemma~\ref{basic_interaction}, if $\beta(h) = o(h)$, we have
\[e(\beta(h), \sigma,h) \gtrsim  \sigma.\]
\end{lemma}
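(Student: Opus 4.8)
The plan is to bound the full error $e(\beta(h), \sigma, h)$ from below by its stochastic (variance) component, discarding the deterministic bias term. Recall the exact identity established earlier in the excerpt,
\[
e(\beta, \sigma, h)^2 = \sigma^2 \sum_{i=1}^n \norm{K_\beta \ast \phi_i}^2 + \sup_{f \in \mathcal{F}(s,R)} \norm{K_\beta \ast P_n E_n f - f}^2.
\]
Since both summands are nonnegative, I can simply drop the bias term and retain only the variance:
\[
e(\beta(h), \sigma, h)^2 \geq \sigma^2 \sum_{i=1}^n \norm{K_{\beta(h)} \ast \phi_i}^2.
\]

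Next I would invoke the key input provided by Lemma~\ref{basic_interaction}, whose hypotheses are exactly those assumed here (namely~\eqref{sup_base}, $K \geq 0$, and $\beta(h) = o(h)$). That lemma gives the uniform lower bound $\norm{K_{\beta(h)} \ast \phi_i} \gtrsim h^{d/2}$, and crucially the implicit constant is uniform in $i = 1, \ldots, n$. Squaring yields $\norm{K_{\beta(h)} \ast \phi_i}^2 \gtrsim h^d$ for each $i$, so summing over the $n$ indices gives
\[
\sigma^2 \sum_{i=1}^n \norm{K_{\beta(h)} \ast \phi_i}^2 \gtrsim \sigma^2 \, n \, h^d.
\]

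Finally I would use the relation $h \sim n^{-1/d}$, equivalently $n h^d \sim 1$, to simplify $\sigma^2 n h^d \sim \sigma^2$. Taking square roots then delivers $e(\beta(h), \sigma, h) \gtrsim \sigma$, which is precisely the claim. This mirrors exactly the calculation carried out for the unregularised case in Lemma~\ref{equal_noreg}, with the bare basis functions $\phi_i$ replaced by their mollifications $K_{\beta(h)} \ast \phi_i$; the only nontrivial ingredient is transferring the lower bound $\norm{\phi_i} \gtrsim h^{d/2}$ from~\eqref{equal_basis} to its convolved analogue, and that transfer is exactly what Lemma~\ref{basic_interaction} supplies.

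There is essentially no obstacle remaining once Lemma~\ref{basic_interaction} is in hand: the entire argument is the concatenation of the variance/bias decomposition, the pointwise lower bound summed over $i$, and the counting identity $n h^d \sim 1$. The genuine difficulty of this statement has already been isolated and discharged in the proof of Lemma~\ref{basic_interaction}, where the assumptions $K \geq 0$ and $\beta(h) = o(h)$ are used to ensure that the convolution $K_{\beta(h)} \ast \phi_i$ remains bounded below on a ball of radius $\sim h$ around $x_i$; the nonnegativity prevents cancellations in the convolution integral, and the scale separation $\beta = o(h)$ guarantees that the mollification does not spread mass outside the region where $\phi_i \geq m$.
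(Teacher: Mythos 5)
Your proposal is correct and coincides with the paper's own proof: both drop the bias term from the variance/bias identity, apply Lemma~\ref{basic_interaction} to bound each $\norm{K_{\beta(h)} \ast \phi_i}^2$ below by a constant times $h^d$ uniformly in $i$, and conclude via $n h^d \sim 1$. No further comment is needed.
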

\begin{proof}
This is a direct consequence of Lemma~\ref{basic_interaction}, since one then has
\[e(\beta, \sigma,h)^2   \geq \sigma^2 \sum_{i=1}^n \norm{ K_\beta \ast \phi_i}_2^2 \gtrsim\sigma^2 \sum_{i=1}^n h^d = \sigma^2 n h^d  \sim \sigma^2.\]

\end{proof}
%
%
%
%
%
%
%

\end{document}